\newcommand{\calG}{\mathcal{G}}
\newcommand{\calH}{\mathcal{H}}
\newcommand{\bfI}{\mathbf{I}}
\newcommand{\Hbar}{\overline{\calH}}
\newcommand{\reals}{\mathbb{R}}
\newcommand{\tr}{\text{tr}}
\theoremstyle{plain}
\newtheorem{thm}{Theorem}[section]
\newtheorem{lem}[thm]{Lemma}
\newtheorem{prop}[thm]{Proposition}
\newtheorem{aspt}[thm]{Assumption}
\newtheorem{rem}[thm]{Remark}
\theoremstyle{definition}
\theoremstyle{remark}
\numberwithin{equation}{section}
\begin{document}

\title[Convergence acceleration of EKI in nonlinear settings]{Convergence acceleration of ensemble Kalman inversion in nonlinear settings}
\author[N. K. Chada] {Neil K. Chada}
\address{Applied Mathematics and Computational Science Program, King Abdullah University of Science and Technology, Thuwal, 23955, KSA}
\email{neilchada123@gmail.com}

\author[X. T. Tong] {Xin T. Tong}
\address{Department of Mathematics, National University of Singapore, 119077, Singapore}
\email{mattxin@nus.edu.sg}

\subjclass{49N45, 65K10, 90C56, 90C25}
\keywords{Ensemble Kalman inversion, Tihkonov regularization, Gauss-Newton method, non-constant step-size, convergence analysis}

\begin{abstract}
Many data-science problems can be formulated as an inverse problem, where the parameters are estimated by minimizing a proper loss function. 
When complicated black-box models are involved, derivative-free optimization tools are often needed.
 The ensemble Kalman filter (EnKF) is a particle-based derivative-free Bayesian algorithm originally designed for data assimilation.
 Recently, it has been applied to inverse problems for computational efficiency. 
 The resulting algorithm, known as ensemble Kalman inversion (EKI), 
 involves running an ensemble of particles with EnKF update rules so they can converge to a minimizer. 
In  this article, we investigate EKI convergence in general nonlinear settings. 
To improve convergence speed and stability, we consider applying EKI with non-constant step-sizes and covariance inflation. 
We prove that EKI can hit critical points with finite steps in non-convex settings. 
We further prove that EKI converges to the global minimizer polynomially fast if the loss function is strongly convex.
 We verify the analysis presented with numerical experiments on two inverse problems.
 \end{abstract}
 \maketitle


\section{Introduction}
A crucial task of  data science is to build mathematical models that can explain existing data, and use it to infer structural information. A general way to formulate this  mathematically is through
\begin{equation}
\label{eqn:IP}
y=\calG(u)+\eta. 
\end{equation}
From \eqref{eqn:IP}, $u\in \reals^{d_u}$ that stands for parameters of interest we try to infer from some associated noisy data $y\in \reals^{d_y}$, where $\calG$ describes the physical laws that relate $u$ and $y$. Finally $\eta$ models uncontrollable noises in the data collection process, which we assume here is {an independent} Gaussian noise, i.e. $\eta \sim \mathcal{N}(0, \Gamma )$. 
{This choice is for the simplicity of exposition. More generally, $\eta$ can be set as a multiplicative noise \cite{MMD19}, or its distribution can be non-Gaussian. The setting of $\eta$ has a crucial impact on the solution of the inverse solver due to the discrepancy principle \cite{EHNR96,MH97}.  }
%
%

Typical example of \eqref{eqn:IP} includes the subsurface flow problem, where $u$ stands for the underground geological structure, $y$ is the pressure reading at different locations, and $\calG$ involves solving  a partial differential equation (PDE) called ``Darcy's law".  In most applications, the forward problem, that is finding $\mathcal{G}(u)$ or $y$ with a given $u$, is relatively straightforward. But the associated inverse problem, that is finding $u$ with a given $y$, can be difficult. 

To solve an inverse problem \cite{MH97} given as \eqref{eqn:IP}, one standard approach is finding $u$ such that $\mathcal{G}(u)$ is closest to $y$. Mathematically, this is equivalent to minimizing the data-misfit function
\begin{equation}
\label{eq:min}
\ell_o(u)=\|\calG(u)-y\|^2_\Gamma.
\end{equation}
This is commonly referred to as the {\textit{variational approach} \cite{EHNR96,AT87}.}
Here and what follows, we use $\|v\|^2_\Gamma=v^T\Gamma^{-1}v$ to denote the Mahalanobis norm of $v$ with weight matrix $\Gamma$. 
Yet, this approach often leads to unphysical solutions that overfit the data, or there can be non-unique solutions. These issues can often be alleviated by incorporating  physical information through regularization \cite{BB18,MH97}. One popular choice is Tikhonov regularization which introduces a preference matrix $\Sigma$ and weight parameter ${\lambda}$, where now we consider minimizing the loss function
\begin{equation}
\label{eq:prior}
\ell(u)=\|\calG(u)-y\|_\Gamma^2+ {\lambda}\|u\|_\Sigma^2. 
\end{equation}
In PDE applications, the matrix $\Sigma$ is often chosen as certain Laplacian operators to enforce smoothness on $u$, and ${\lambda}$ is set as a tuning parameter. Another way of solving the inverse problem \eqref{eqn:IP} which {does not require} computing the minimizer of \eqref{eq:min} is the Bayesian approach \cite{AMS10}. This approach characterizes the solution as the conditional distribution of $u$ given $y$, i.e. $p(u|y)$, known as the posterior distribution. The physical information can be incorporated by assuming a {prior distribution $p_0(u)$ for $u$, such as $\mathcal{N}(0,{\lambda}^{-1}\Sigma)$}. {Then the posterior distribution of the inverse solution can be expressed as}
\begin{align}
\label{eq:post}
\begin{split}
{
p_1(u):=p(u|y)} &{=\frac1{\sqrt{\text{det}(2\pi\Gamma)}}\exp\left(-\frac12\|\calG(u)-y\|_\Gamma^2
\right)p_0(u)} \\ 
&{\propto \exp\left(-\frac12 \ell_0(u)\right)p_0(u).}
\end{split}
\end{align}
The variational and Bayesian approaches are closely related. In particular the minimizer of \eqref{eq:prior} is also known as the  maximum a posteriori estimator in Bayesian statistics. 

In order to minimize \eqref{eq:min} or \eqref{eq:prior}, classical optimization methods, such as {gradient descent, require first-order gradient information} of $\mathcal{G}$. However, this information can be computationally expensive if $u$ is high dimensional or if the model is complex. This is often the case for many modern-day applications. For example, in numerical weather prediction (NWP), $u$ represents the atmospheric and oceanic state on earth. Its dimension can exceed $10^8$ and $\calG$ describes the evolution of a fluid equation of multiple scales. In situations such as this, one would rather treat $\calG$ as a black-box, and apply optimization methods that are derivative-free \cite{RS13}.

The ensemble Kalman filter (EnKF) \cite{GE09,GE03} is a derivative-free algorithm designed for data assimilation problems \cite{LSZ15,SR19}, {which can be formulated as the inference of a dynamical system $u_n$ using sequential observations $y_n$ in 
\begin{equation}
\label{eqn:DA}
u_{n+1}=\mathcal{A}(u_n),\quad y_n=\calG(u_n)+\eta_n,
\end{equation}
where $\mathcal{A}$ describes the update rule of certain dynamical systems. }\eqref{eqn:DA}  can be interpreted as an inverse problem where a sequence of interrelated parameters are to be recovered. EnKF was originally derived as a Monte Carlo approximation of the Kalman filter \cite{REK60}, which is the Bayesian solution to \eqref{eqn:DA} assuming $\mathcal{A}$ and $\calG$ are linear. 
Because its formulation is derivative-free, the EnKF can be executed efficiently, and hence has been widely applied for NWP problems \cite{CR13,HM01}. The application of the EnKF to the setting of  inverse problems goes back to  \cite{ORL08}. {In short, it interprets the inverse problem \eqref{eqn:IP} as a special form of \eqref{eqn:DA} with $\mathcal{A}$ being the identity map $\mathcal{A}(u)=u$, and $y_n\equiv y$.} Since then, a wide development of work has been done on applying ensemble Kalman methods for inverse problems arising in PDEs.This was initiated by the works of Iglesias  \cite{MAI16,ILS13} and has lead to numerous further directions \cite{NKC17,CIRS17,SS17}. 
We will refer the application of EnKF to inverse problems as ensemble Kalman inversion (EKI).

As a short description, EKI draws an initial ensemble from the prior distribution or a smoothed version of it, and repeatedly applies the EnKF to update the ensemble so it fits the data better. 
In the linear setting, the continuous time approximation of EKI eventually converges to the minimizer of the data-misfit function  \eqref{eq:min} \cite{SS17}. 
{However, this minimizer in general does not contain prior information and may overfit the data.}
To avoid this issue, one approach was devised by  \cite{MAI16} which incorporated iterative {{Levenberg--Marquardt} regularization, taking motivation from an earlier work \cite{MH97}. }
Recently, { another more direct approach  is found by introducing an artificial observation in the EnKF step}, so the ensemble converges instead to the minimizer of \eqref{eq:prior}. This formulation is known as  Tikhonov ensemble Kalman inversion \cite{CST18}. Our investigation will mostly focus on it, {and for simplicity of notation, we will refer it as EKI in the discussion below. }

Despite the empirical success of EKI in the references aforementioned, its behaviour as an optimizer for \eqref{eq:prior} is not well understood. Convergence results of EKI are available only for linear observations and the continuous-time limit of EKI iterates \cite{NKC17,KLS14,KM15,GMT11,MCB11,SS17}. 
However, EKI algorithms in practice have to run at discrete time, and the observations are rarely linear. 
Moreover, recent machine learning  research has shown that using non-constant step-size or learning rate can significantly improve the optimization algorithm results \cite{DHS11,YN04,PMV01,PJ92}. For EKI, using a non-constant step-size is related to incorporating covariance inflation \cite{SS17}, 
which are important tuning techniques for improving both the accuracy and stability in NWP \cite{JLA07,JLA09,TMK16, MT18, XTT18}. 
Yet, these important features and connections can not be revealed if one investigates only the continuous-time limit. 

This paper intends to fill these gaps by investigating EKI as a derivative-free optimization tool. Our contributions are highlighted through the following:
\begin{itemize}
\item We develop a new version of the Tikhonov EKI algorithm, where non-constant step-sizes and covariance inflation are applied. 
These modifications are essential to the algorithm performance both in theory and numerical tests.
\item We compare the long time behavior of EKI with the Gauss--Newton method in a general nonlinear setting. Such comparison leads to an intuitive explanation why EKI can be used for optimization.
\item Assuming a general nonlinear map $\calG$, we show that EKI can converge to approximate critical points with finitely many iterations. If  in addition the regularized loss function \eqref{eq:prior} is strongly convex, we show that EKI converges to the global minimum at a polynomial speed. 
\item Based on our convergence analysis, we provide guidelines on how to choose the step-size and covariance inflation in EKI. We implement the EKI on the Lorenz 96 model {in 1D}, and a nonlinear elliptic partial differential equation in {1D and 2D}. {Our new implementation of EKI will be compared to the standard vanilla Tikhonov EKI. We also compare and contrast EKI with the Gauss--Newton method as much of our derived theory is based on their comparison. }
\end{itemize}

\subsection{Notation and organization}

The structure of this article is as follows. In Section \ref{sec:ESRF} we provide an overview of the preliminary material required, reviewing the EnKF, while introducing our formulation of the inverse problem loss function \eqref{eq:prior}. This leads to Section \ref{sec:main_res} where we state the main results while introducing the assumptions on the optimization and convergence analysis. Numerical verification of the results are shown in Section \ref{sec:numerical}, while finally we conclude our findings and discuss potential areas of future work in Section \ref{sec:conc}. The appendix will contain the majority of proofs from Section \ref{sec:main_res}.

{
Throughout the article we use $\| \cdot \|$ and  $\langle \cdot , \cdot \rangle$ to denote $l_2$ norm and its corresponding inner product.  For any arbitrary function, we will further denote its Jacobian and Hessian matrix as $\nabla$ and $\nabla^2$. Given a matrix $A \in \mathbb{R}^{m \times n}$ the $l_2$-operator norm is defined as 
$\| A \| = \sup_{v\in \mathbb{R}^n,\|v\|=1} \|Av\|$. Given two symmetric matrices $A$ and $B$, we use $A \succeq B$ to indicate the matrix $A-B$ is positive semidefinite. Given a covariance matrix $\Gamma$ the Mahalanobis norm  is defined by $\|v\|^2_\Gamma=v^T \Gamma^{-1}v$. }

\section{Tikhonov ensemble Kalman inversion}
\label{sec:ESRF}

In this section we provide the key steps for deriving the Tikhonov EKI algorithm. 
We initiate with  an overview of optimization with iterative Bayesian approaches, while discussing how to implement EKI with non-constant step-sizes and covariance inflations. For notation simplicity, we assume ${\lambda}=1$ in our discussion. This does not sacrifice any generality, since we can always replace $\Sigma$ with ${\lambda}^{-1}\Sigma$ otherwise.

\subsection{Optimization by iterative Bayesian sampling}
\label{sec:optBay}
{
The idea of using sampling methods for optimization can be traced back to simulated annealing \cite{BT93,KGV83}. To implement it, we try to sample the distribution $\pi_N(u)\propto \exp\left(- \frac N{2}\ell_o(u)\right)$ with increasingly large $N$. Then most of the samples will concentrate around the minimizer of $\ell_o$. In practice, $\pi_N(u)$ can be difficult to sample directly when $N$ is large, because typical Markov Chain Monte Carlo algorithms may be trapped at one mode of $\ell_o$ and fail explore other modes. One common strategy to resolve this issue is to use sequential Monte Carlo or tempering techniques \cite{BCJ14,DDJ06, KBJ14}, so the algorithm can sample a sequence of distributions $\pi_i$, where the update from $\pi_i$ to $\pi_{i+1}$ is simple to achieve. In below, we explain how to do this by iterating Bayesian algorithms.} 


{Bayesian algorithms are designed to obtain posterior distribution samples when the prior and observation data are given. In particular, suppose $u$ follows a prior distribution $p_0$, while the observation $y$ is modeled by \eqref{eqn:IP}, Bayesian algorithms intend to sample the posterior distribution}
\[
p_1(u)=\frac1{\sqrt{\text{det}(2\pi\Gamma)}}\exp\left(-\frac12\|\calG(u)-y\|_\Gamma^2\right)p_0(u)\propto \exp\left(-\frac12 \ell_o(u)\right)p_0(u). 
\]
Then suppose we use $p_1$ as the prior {formally}, and the  data $y$ as the observation again, the next ``posterior" distribution is given by
\[
p_2(u)=\frac1{\sqrt{\text{det}(2\pi\Gamma)}}\exp\left(-\frac12\|\calG(u)-y\|_\Gamma^2\right)p_1(u)\propto \exp\left(- \ell_o(u)\right)p_0(u). 
\]
If we iterate this procedure $N$ times, the resulting posterior is given by 
\begin{equation*}
\label{eqn:pN}
p_N(u)\propto  \exp\left(- \frac N{2}\ell_o(u)\right)p_0(u).
\end{equation*}
When $N$ is large enough, most of the probability mass of $p_N$ will concentrate on the minimum of $\ell_o$. {This is similar to the idea of simulated annealing.}

{It is worthwhile emphasizing the Bayesian algorithm is used here only to solve an optimization problem, as we are not trying to solve a Bayesian problem. Otherwise, the same data $y$ should not be used iteratively. If we are interested in  sampling, for example $p_1$, with EKI techniques, the distribution sequence needs to be adjusted by tempering techniques,  as discussed in \cite{SS17}.
}


\subsection{Regularized observation}
\label{sec:regular}
As discussed in the introduction, the minimizer of the data-misfit function $\ell_o$ may be a nonphysical solution which overfits the data. It is often more desirable to minimize the regularized loss function $\ell$ \eqref{eq:prior} instead. {Therefore} we need to include the regularization term into the observation model. This is achieved by concatenating the real observation $\calG(u)$ with a direct artificial  observation with observation noise $N(0,\Sigma)$. {Our setup will follow almost identically to that of \cite{CST18}}.  We begin by extending \eqref{eqn:IP} to the equations
\begin{subequations}
\label{eq:invT}
\begin{align}
\label{eq:inv_re1}
y&=\calG(u) + \eta, \\ 
\label{eq:inv_re2}
u&= \zeta, 
\end{align}
\end{subequations}
where $\eta, \zeta$ are independent random variables
distributed as $\eta \sim N(0,\Gamma)$ and $\zeta \sim N(0,\Sigma).$
Define  variables $z,\xi$ and  mapping $\calH: \reals^{d_u} \mapsto
\reals^{d_y} \times \reals^{d_u}$ as follows,
\[
z=\begin{bmatrix}
y\\
0
\end{bmatrix},\quad
\calH(u)=\begin{bmatrix}
\calG(u)\\
u
\end{bmatrix},
\quad
\xi=\begin{bmatrix}
\eta\\
\zeta
\end{bmatrix}.
\]
Then note that
\[
\xi \sim N(0,\Gamma_+), \quad
\Gamma_{+} =
\begin{bmatrix}\
\Gamma & 0\\
0 &\Sigma
\end{bmatrix}.
\]
We can express our modified inverse problem as
\begin{equation}
\label{eqn:IPnew}
z = \calH(u) + \xi.
\end{equation}
Under this transformation, the regularized loss function $\ell$ in \eqref{eq:prior} can be express as the data-misfit function of \eqref{eqn:IPnew}:
\begin{equation*}
\ell(u)=\| \Gamma^{-1/2}(\calG(u) -y)\|^2+ \|\Sigma^{-1/2}u\|^2=\| (\calH(u)-z)\|^2_{\Gamma_+}.  
\end{equation*}
\subsection{Kalman filter and ensemble formulation}
\label{ssec:enkf}
When {the distribution $p_n$} follows Gaussian $\mathcal{N}(b_n, \Sigma_n)$ and $\calH(u)=Hu$ is linear, the Kalman filter \cite{REK60, TJS14}
 provides explicit formulas for the posterior distribution $p_{n+1}$ in \eqref{eq:post}. In particular, $p_{n+1}$ is given by $\mathcal{N}(b_{n+1}, \Sigma_{n+1})$, where 
\begin{align}
\label{eqn:Kalman}
\begin{split}
{\Sigma_{n+1}}&{=\Sigma_n-\Sigma_n H^T ( \Gamma_++ H\Sigma_n H^T )^{-1}H\Sigma_n,} \\  
{b_{n+1}}&{=\Sigma_{n+1} \Sigma_n^{-1}b_n +\Sigma_{n+1}H^T\Gamma_+^{-1}z.} 
\end{split}
\end{align}
Iterating the same formula, one can find the sequential distributions $p_n$ are all Gaussian, which implies the mean and covariance all have explicit forms. 

In {practice}, applying the Kalman filter can be difficult, as $\calH$ may be nonlinear, and inverting the associated matrices can be expensive if the underlying dimension $d_u$ is large. The EnKF algorithm is designed to overcome these two issues. It uses a group of particles $\{u^{(i)}_n\}_{i=1}^{K}$ to represent the Gaussian distribution $p_N$, where the covariance matrices in \eqref{eqn:Kalman} can be approximated by their emperical versions. 

In particular, we define
\begin{equation}
\label{eq:samp_mean}
m_n=\frac1K\sum_{i=1}^K u^{(i)}_n, \quad \Hbar_n=\frac{1}{K}\sum_{i=1}^K \calH(u_n^{(i)}).
\end{equation}
and the sample covariances
\begin{subequations}
\label{eq:samp_cov}
\begin{alignat}{4}
C^{uu}_n&=\frac1K\sum_{i=1}^K (u^{(i)}_n-m_n) \otimes(u^{(i)}_n-m_n), \\ C^{pp}_n&=\frac1K\sum_{i=1}^K (\calH(u^{(i)}_n)-\Hbar_n) \otimes(\calH(u^{(i)}_n)-\Hbar_n),
\\
C^{pu}_{n}&= \frac1K\sum_{i=1}^K (\calH(u^{(i)}_n)-\Hbar_n) \otimes (u^{(i)}_n-m_n),\quad C^{up}_{n}= (C^{pu}_n)^T.
\end{alignat}
\end{subequations}
Suppose $u^{(i)}_n$ are i.i.d. samples from $\mathcal{N}(b_n,\Sigma_n)$ and $\calH(u)=Hu$, it is evident that $m_n$, $C^{uu}_n$, $C^{pp}_n$, and $C^{up}_n$ are 
 approximations of $b_n, \Sigma_n, H\Sigma_n H^T,$ and $\Sigma_nH^T$. By inserting these approximations in \eqref{eqn:Kalman}, {we attain} $p_{n+1}$. {We refer the reader to various references regarding analysis of the EnKF in the large ensemble limit \cite{LS19a,GMT11}}. There are in general two ways to update the particles such that their mean and covariance satisfy \eqref{eqn:Kalman}. The first way is directly updating the particles by 
\begin{align}
\label{eqn:particleupdate}
u^{(i)}_{n+1}&=u^{(i)}_n+C^{up}_n(C^{pp}_n +\Gamma_+ )^{-1}(z+\xi_{n}^{(i)} -\calH(u_n^{(i)})),
\end{align}
where $\xi_n^{(i)}$ are i.i.d. samples from $\mathcal{N}(0,\Gamma_+)$. With these artificial noises, one can show that on average, the mean and covariance of $\{u^{(i)}_{n+1}\}_{i=1}^K$ are approximately  $(b_{n+1}, \Sigma_{n+1})$ in \eqref{eqn:Kalman}. 
{If we consider the application of \eqref{eqn:particleupdate} to inverse problems, the resulting methodology is known as original ensemble Kalman inversion in \cite{ILS13,ORL08}. As can be told from \eqref{eqn:particleupdate}, this algorithm is derivative-free.}

 On the other hand, adding artificial noises $\xi_n^{(i)}$ in \eqref{eqn:particleupdate} creates fluctuation and instability.  The second way is simply finding a group of particles such that their mean and covariance match the target formulas in \eqref{eqn:Kalman}. This leads to the mean update 
\begin{equation}
\label{eqn:meanupdate}
m_{n+1}=m_n+ C^{up}_n( C^{pp}_n +\Gamma_+ )^{-1}(z -\calH(m_n)). 
\end{equation}
Then we seek a new ensemble centered at $m_{n+1}$, so that 
\begin{equation*}
C^{uu}_{n+1}=C^{uu}_n- C^{up}_n ( \Gamma_++ C^{pp}_n )^{-1}C_n^{pu}. 
\end{equation*}
This formulation is often called the ensemble square root filter (ESRF)\cite{LDN08,TAB03}. 
{There are several different ways to update the ensemble so this holds, either through factorizing the associated matrices or formulating it as an optimal transport problem \cite{LSZ15,MH12,RC15}.}
ESRF is known to perform better than the particle formulation \eqref{eqn:particleupdate}.  {The large ensemble limit of it has been analyzed in a similar context \cite{KM15,LS19a}.}

{It is worth mentioning that Kalman filter formulas \eqref{eqn:Kalman} produce the accurate distribution updates only when $\mathcal{H}$ is linear and the underlying distributions $p_n$ are Gaussian. 
Alternatively, these formulas can also be derived through a variational approach where the Gaussian assumption is not necessary
(see e.g. Section 7.2 of \cite{TJS14}).
When the observation model $\mathcal{H}$ is nonlinear, Kalman filter in general can be inaccurate. Variants such as the extended Kalman filter 
\cite{RGY99} and unscented Kalman filter \cite{KU97,WM00} are designed to handle the nonlinearity.
 In particular, EnKF, as a simple particle implementation of Kalman filter,}  is routinely applied for nonlinear DA problems such as numerical weather forecast, and it yields reasonably good forecast skills. The exact mathematical reason in behind is still largely unknown despite active researches in this direction \cite{KLS14, KMT16}. Our investigation of EKI in below aims to shed light on this issue, but in the context of inverse problems. {In particular, our results below hold for general nonlinear observation model $\calH$ and require no Gaussian assumptions.}

\subsection{Non-constant step-size and covariance inflation}
Recall that in the gradient descent (GD) algorithm, one generates a sequence of iterates $u_n$ to approach the minimum of $\ell(u)$. One way to update the iterate is by 
\[
u_{n+1}=u_n-h\nabla \ell(u_n),
\]
where $h>0$ is often called the step-size. One can interpret GD as implementing Euler's method for the gradient flow of an ordinary differential equation.  {In the optimization literature, it is shown that using non-constant sequence $h_n$ may improve algorithm performance \cite{YN04}. In particular, decreasing step-sizes allow the algorithm to take larger steps at earlier iterations and explore more regions, while converging to a solution faster with smaller steps in later iterations. Moreover, one can implement  Armijo rule or Wolfe condition to further improve convergence \cite{NW06}.  }

One of the findings in this paper and some earlier works is that the EKI in the long run similar to GD \cite{HLR18,KS18}. One would naturally conjecture that implementing non-constant step-size may lead to improved optimization performance. {In \cite{SS17}, it is shown that to implement EKI with a constant small step-size $h$, we simply replace $\Gamma_+$ with $h^{-1}\Gamma_+$. One can also reach such procedure by considering applying tempering techniques from sequential Monte Carlo when sampling \eqref{eq:post} \cite{DDJ06,MPT18}. } Here we implement the same idea, except that we explore the possibility of using the step-size $h_n=h_0 n^{\beta}$ in place of $h$. When $\beta=0$, this is the same taking a constant step-size. As a result, the mean update formula is given by 
\begin{equation}
\label{eqn:EKImean}
m_{n+1}=m_n+  C^{up}_n( C^{pp}_n +h_n^{-1}\Gamma_+ )^{-1}(z -\calH(m_n)). 
\end{equation}
{One interesting fact we found is that } the step-size for EKI does not need to decrease. This is because the movement made by \eqref{eqn:EKImean} is closer {to a}  Gauss--Newton type of algorithm, instead of a GD type of algorithm. Yet, the step-size parameter does control the final convergence speed. This will be clearer when we have more analysis results. Specifically Remark \ref{rem:step-size} will provide further details. {In the continuous-time setting of EKI, adaptive time-steppings have also been introduced in \cite{CST18,KS18} which work well, however nothing in terms of the discrete-time setting.}

%
Aside from implementing the non-constant step-size, we will also apply additive covariance inflation \cite{JLA07,JLA09,TMK16} for the update formula. The resulting covariance update is given by
\begin{equation}
\label{eqn:EKIcov}
C^{uu}_{n+1}=C^{uu}_n- C^{up}_n (C^{pp}_n +h^{-1}_n \Gamma_+)^{-1}C_n^{pu}+\alpha^2_n \Sigma,
\end{equation}
where $\alpha_n$ is a sequence of positive parameters to be specified. In the literature of EnKF, covariance inflation is commonly applied for improved algorithm stability. It is {similar} to adding a stochastic noise in the particle formulation \eqref{eqn:particleupdate}. {In particular, one can verify the sample covariance of 
\[
u^{(i)}_{n+1}=u^{(i)}_n+C^{up}_n(C^{pp}_n +h^{-1}_n\Gamma_+ )^{-1}(z+\xi_{n}^{(i)} -\calH(u_n^{(i)}))+\alpha_n\zeta^{(i)}_n
\]
with independent  $\zeta^{(i)}_n\sim \mathcal{N}(0,\Sigma)$ will follow \eqref{eqn:EKIcov} when $N$ goes to infinity.}
Such operation is also known as jittering or rejuvenation in data assimilation \cite{GSS93}, and it in general makes the associated Kalman filter system controllable. 

{It is worth mentioning that there might be several different  ways to implement the ERSF ensemble update so that \eqref{eqn:EKIcov} holds. {We provide one possible implementation as Algorithm \ref{alg:1}  in the appendix. }Our discussion below does not rely on the particular choice of numerical method, but only the relation \eqref{eqn:EKIcov} itself. Also, since the rank of $N$-sample covariance matrix is at most $N-1$, and \eqref{eqn:EKIcov} is in general full rank, so to implement this version of EKI, the ensemble size needs to be larger than the dimension of $u$. When the dimension of $u$ is large, this can be computationally expensive. But it can be partially resolved by selecting a proper subspace to implement EKI, which is discussed in \cite{CIRS17}}.

\section{Main results}
\label{sec:main_res}
In this section we state our analysis results regarding the convergence  of Tikhonov EKI. We first aim to understand the behaviour of the ensemble, more specifically the ensemble covariance $C^{uu}_n$. We then compare the EKI update and  Gauss--Newton (GN) update, where we show that their difference converges to zero.  Finally we state results regarding both convergence towards local and global minimizers. The proofs of these results will be omitted from this section and are provided in the appendices.
\subsection{Ensemble covariance collapse}
The first step of our analysis involves understanding the ensemble configuration of EKI. 
For simplicity, we impose the following regularity assumption for the map $\calH$.
\begin{aspt}
\label{aspt:obsLip}
$\calH$ has bounded first and second order derivatives. So there are constants $M_1$ and $M_2$ such that for all $z,z'$ and $v$ in $\reals^{d_u+d_y}$
\[
\|\nabla \calH(z)\|\leq M_1,\quad \|\calH(z')-\calH(z)\|\leq M_1\|z'-z\|,\quad v^T\nabla^2 \calH(z) v \leq M_2 \|v\|^2. 
\]
\end{aspt}
\begin{thm}
\label{thm:precision}
Under Assumption \ref{aspt:obsLip}, suppose the EKI algorithm \eqref{eqn:EKImean}-\eqref{eqn:EKIcov} is implemented with $h_n=h_0 n^\beta$ and $\alpha_n^2=\alpha_0^2h_0^{-1} n^{2\gamma-\beta-2}$, where the parameters satisfy  
\[
\gamma-1\leq\beta\leq \gamma.
\]
 Then the sample covariance $C^{uu}_n$ is bounded from above and below for all $n\geq 1$,
\[
\kappa_m n^{\gamma-\beta-1} \Sigma \preceq C^{uu}_n \preceq \kappa_M n^{\gamma-\beta-1} \Sigma\quad \text{with  constants  }\kappa_m,\kappa_M>0.
\]
%
\end{thm}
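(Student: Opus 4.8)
The plan is to establish both inequalities simultaneously by induction on $n$, after rescaling by $\Sigma$. Write $\widetilde C_n:=\Sigma^{-1/2}C^{uu}_n\Sigma^{-1/2}$ and $p:=\gamma-\beta-1$, so that $\gamma-1\le\beta\le\gamma$ forces $-1\le p\le 0$ and the claim reads $\kappa_m n^p I\preceq\widetilde C_n\preceq\kappa_M n^p I$. Two preliminary facts make the induction well-posed: since the ensemble size exceeds $d_u$ we have $C^{uu}_1\succ0$, and the term subtracted in \eqref{eqn:EKIcov} is a Schur complement inside a positive semidefinite block matrix, so $C^{uu}_{n+1}=(C^{uu}_n-C^{up}_n(C^{pp}_n+h_n^{-1}\Gamma_+)^{-1}C^{pu}_n)+\alpha_n^2\Sigma\succeq\alpha_n^2\Sigma\succ0$ for every $n$; hence all $\widetilde C_n$ are positive definite. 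I will pick $\kappa_M$ large and $\kappa_m$ small, verify the inductive step for all $n$ past a threshold $N_0$, and absorb the finitely many cases $n\le N_0$ by further enlarging $\kappa_M$ and shrinking $\kappa_m$.

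The core of the proof is a pair of two-sided estimates for $S_n:=C^{up}_n(C^{pp}_n+h_n^{-1}\Gamma_+)^{-1}C^{pu}_n$. Writing the sample covariances of \eqref{eqn:IPnew} in block form along $\calH(u)=(\calG(u),u)$, one has the exact identity $C^{up}_n\Gamma_+^{-1}C^{pu}_n=C^{ug}_n\Gamma^{-1}C^{gu}_n+C^{uu}_n\Sigma^{-1}C^{uu}_n$, where $C^{gu}_n$ is the sample cross-covariance of $\calG(u^{(i)}_n)$ and $u^{(i)}_n$; in particular $C^{up}_n\Gamma_+^{-1}C^{pu}_n\succeq C^{uu}_n\Sigma^{-1}C^{uu}_n$ with \emph{no} linearization error. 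Combined with $C^{pp}_n\preceq\tau\|\widetilde C_n\|\,\Gamma_+$ --- which follows from $C^{gg}_n\preceq M_1^2\tr(C^{uu}_n) I$ (the Lipschitz half of Assumption \ref{aspt:obsLip}) and a block-matrix inequality, with $\tau$ depending only on $M_1,\Gamma,\Sigma$ --- this gives the lower estimate $S_n\succeq(\tau\|\widetilde C_n\|+h_n^{-1})^{-1}C^{uu}_n\Sigma^{-1}C^{uu}_n$. For the upper estimate I drop $C^{pp}_n$ from the inverse, $S_n\preceq h_n C^{up}_n\Gamma_+^{-1}C^{pu}_n$, and Taylor-expand $\calG$ about $m_n$: Assumption \ref{aspt:obsLip} yields $C^{gu}_n=\nabla\calG(m_n)C^{uu}_n+R_n$ with $\|R_n\|$ controlled by the particle spread, hence by $\|C^{uu}_n\|$, so $C^{ug}_n\Gamma^{-1}C^{gu}_n\preceq(q_+-1)C^{uu}_n\Sigma^{-1}C^{uu}_n$ for a constant $q_+$ (when $p<0$ the remainder is of strictly lower order and $q_+$ depends only on $M_1,M_2,\Gamma,\Sigma$; at the endpoint $p=0$ the remainder is absorbed through the inductive sandwich). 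Thus $S_n\preceq q_+ h_n C^{uu}_n\Sigma^{-1}C^{uu}_n\preceq q_+ h_n\|\widetilde C_n\|\,C^{uu}_n$.

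Feeding these into \eqref{eqn:EKIcov} closes the induction, and the choices of exponents become transparent. For the upper bound, the lower estimate on $S_n$ gives $\widetilde C_{n+1}\preceq\widetilde C_n-(\tau\|\widetilde C_n\|+h_n^{-1})^{-1}\widetilde C_n^2+\alpha_n^2 I$; passing to $\rho_n:=\lambda_{\max}(\widetilde C_n)$ (the scalar map $t\mapsto t-ct^2$ is increasing on $[0,\rho_n]$ once $c\rho_n\le\tfrac12$, which holds since $c=(\tau\rho_n+h_n^{-1})^{-1}\le(\tau\rho_n)^{-1}$ after enlarging $\tau$ so $\tau\ge2$) yields $\rho_{n+1}\le\rho_n-\rho_n^2/(\tau\rho_n+h_n^{-1})+\alpha_n^2$. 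Using $\rho_n\le\kappa_M n^p$ and that $h_n^{-1}=h_0^{-1}n^{-\beta}$ dominates $\tau\kappa_M n^p$ in the denominator --- precisely because $p+\beta=\gamma-1\le0$ --- the contracting term is at least $\tfrac12 h_0\kappa_M^2 n^{2p+\beta}=\tfrac12 h_0\kappa_M^2 n^{p+\gamma-1}$, exactly the order of $\alpha_n^2=\alpha_0^2 h_0^{-1}n^{p+\gamma-1}$; taking $\kappa_M>\sqrt2\,\alpha_0/h_0$ makes the $n^{p+\gamma-1}$ coefficient of $\rho_{n+1}-\rho_n$ negative, which for $n\ge N_0$ beats the nonpositive, $O(n^{p-1})$ target decrement $\kappa_M((n+1)^p-n^p)\ge\kappa_M p\,n^{p-1}$, so $\rho_{n+1}\le\kappa_M(n+1)^p$. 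For the lower bound, the upper estimate on $S_n$ gives $\widetilde C_{n+1}\succeq(1-q_+ h_n\|\widetilde C_n\|)\widetilde C_n+\alpha_n^2 I$; since $h_n\rho_n\le h_0\kappa_M n^{\gamma-1}$ and $\gamma-1\le0$, the prefactor is positive for $n\ge N_0$, and using $\widetilde C_n\succeq\kappa_m n^p I$ with $(n+1)^p\le n^p$ one gets $\widetilde C_{n+1}\succeq\kappa_m(n+1)^p I$ provided $\alpha_0^2 h_0^{-1}\ge q_+ h_0\kappa_M\kappa_m$, i.e. $\kappa_m\le\alpha_0^2/(q_+ h_0^2\kappa_M)$. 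So the constants are fixed in the order $\kappa_M$ (large), then $q_+$, then $\kappa_m$ (small), and one checks these inequalities are jointly satisfiable under the stated constraints on $\beta,\gamma$.

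The step I expect to be the main obstacle is the interplay between two facts. First, at the claimed scale the covariance-contracting term $\asymp h_n C^{uu}_n\Sigma^{-1}C^{uu}_n$ and the inflation $\alpha_n^2\Sigma$ are \emph{of the same order} $n^{p+\gamma-1}$: there is no slack, so one cannot crudely dominate one by the other but must track leading constants and show the recursion has a stable invariant profile --- essentially a discrete Lyapunov argument. Second, the discrepancy between $C^{pp}_n,C^{pu}_n$ and their linearizations is governed by $\max_i\|u^{(i)}_n-m_n\|$, which is itself only controlled through $\|C^{uu}_n\|$, the very quantity being bounded, so the error estimates have to be carried inside the induction; it matters that the contraction of $\rho_n$ rests only on the exact identity $C^{up}_n\Gamma_+^{-1}C^{pu}_n\succeq C^{uu}_n\Sigma^{-1}C^{uu}_n$ and the crude bound on $C^{pp}_n$, so no linearization error degrades it, while the nonlinearity enters only the less delicate lower bound on the smallest eigenvalue. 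Finally, the boundary regimes deserve separate care: $\beta=\gamma-1$ (where $C^{uu}_n$ stays of order one, the remainder $R_n$ does not vanish, and $q_+$ must be kept under control through the sandwich, possibly at the cost of a mild condition on $M_2$) and $\beta=\gamma$ (fastest collapse).
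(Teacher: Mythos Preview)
Your plan is essentially the paper's: induction on two-sided eigenvalue bounds for $\widetilde C_n$, driven by opposite estimates on the Schur-complement term $S_n$. The paper organizes it slightly differently---it first proves the upper bound alone (in fact it tracks the precision, deriving the clean recursion $\omega_{n+1}\ge(\omega_n+h_n)/(1+\alpha_n^2(\omega_n+h_n))$ for the smallest eigenvalue of $(C^{uu}_n)^{-1}$), and only then feeds this into the lower bound. Your simultaneous induction is fine because, as you yourself note, the $\rho_n$-recursion does not use the lower bound at all; but the sequential order makes the constant-choosing cleaner.

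The one substantive difference is your route to the upper estimate on $S_n$. The paper avoids Taylor expansion entirely here: writing $C^{uu}_n=S^u(S^u)^T$, $C^{up}_n=S^u(S^p)^T$, one has $S_n=S^uA(S^u)^T$ with $A=(S^p)^T(C^{pp}_n+h_n^{-1}\Gamma_+)^{-1}S^p$, and a short computation gives $A\preceq\bigl(1+\omega_n/(M_1^2d_uh_n)\bigr)^{-1}I$ using only $\|C^{pp}_n\|\le M_1^2d_u\|C^{uu}_n\|$. This yields $S_n\preceq q_+h_n\rho_n\,C^{uu}_n$ with $q_+$ depending only on $M_1,d_u$, exactly what you need. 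Your Taylor-based intermediate claim $C^{ug}_n\Gamma^{-1}C^{gu}_n\preceq(q_+-1)C^{uu}_n\Sigma^{-1}C^{uu}_n$ is shakier as a \emph{matrix} inequality: the remainder $R_n$ contributes a term of operator norm $O(\rho_n^{5/2})$, and absorbing $O(\rho_n^{5/2})I$ into $\text{const}\cdot(C^{uu}_n)^2$ requires controlling $\lambda_{\min}(C^{uu}_n)$, hence $\kappa_m$, making $q_+$ depend on $\kappa_m$ and your constant ordering circular. You can fix this by keeping the remainder as an additive $O(\rho_n^{5/2})I$ term and noting it is $o(\alpha_n^2)$ when $p<0$---but the spread-matrix argument is simpler and handles the endpoint $p=0$ without the extra care you flag.
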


In the view of classical linear Kalman filter theory \cite{REK60}, this result indicates that the system is observable and controllable.
{It is worth mentioning that the lower bound of sample covariance is non-trivial. In the vanilla  EKI, such lower bound cannot be derived \cite{CST18,SS17}. And if the ensemble happens to collapse onto the same point, the vanilla EKI will stagnate at that point. In contrast, the balanced step-size and covariance inflation we implement here keep the ensemble from collapsing too fast and premature stagnation.}

\subsection{Connection with Gauss--Newton}
\label{sec:GN}

{The Gauss--Newton (GN) method  is a popular choice as an optimizer for non-linear least squares problems.
Its application in solving inverse problems has been well-documented and studied \cite{BHM09, EHNR96}. {In particular, when applying it to minimize a loss function of form $l(v)=\|Q(v)-z\|^2$, it generates iterates $v_n$ by running 
\begin{equation}
\label{eqn:GNupdate}
v_{n+1}=v_n-(\nabla Q(v_n)^T\nabla Q(v_n) )^{-1} \nabla Q(v_n)^T(Q(v_n)-z).
\end{equation}}}
{In the Kalman filter literature, it is a known fact that the Kalman filter and the extended Kalman filter are closely related to the GN method, since their mean update \eqref{eqn:Kalman} can be seen as the update rule \eqref{eqn:GNupdate}. 
We will show  in this section, that as an ensemble formulation of the Kalman filter, EKI inherits such a connection in the long run. This intuitively explains why EKI is an appropriate optimization tool. It is also worth pointing out that both Gauss--Newton and extended Kalman filter need first order gradient information to implement, so it is nontrivial that EKI can achieve similar update rule without gradient information.}



{Given that our iterated posterior} $p_n$ is assumed to be $\mathcal{N}(m_n, C^{uu}_n)$, and that $p_{n+1}$ is assumed to be Gaussian, the mean of $p_{n+1}$,  $m_{n+1}$,  should be the minimizer of $-\log p_{n+1}$, which is proportional to 
\begin{equation}
\label{eqn:elln}
\ell_{n+1}(u)= \|u-m_n\|^2_{C^{uu}_n}+\|\calH (u)-z\|^2_{\Gamma_+ h_n^{-1}}. 
\end{equation}
Note that we have replaced $\Gamma_+$ with $\Gamma_+ h_n^{-1}$ to implement our non-constant step-size.  Since $\ell_{n+1}$ is of nonlinear-least-square form, given the current mean $m_n$, the GN method indicates that $m_{n+1}$ should be $m_n+G_n$, where
\begin{align}
\notag
G_n&=[h_n^{-1}(C_n^{uu})^{-1}+ J_n^T\Gamma_+^{-1} J_n]^{-1} J^T_n \Gamma_+^{-1} (z -\calH(m_n))\\
\label{eqn:Gn}
&=C^{uu}_nJ_n^T ( J_nC^{uu}_nJ_n^T+ h^{-1}_n\Gamma_+)^{-1}(z -\calH(m_n)),\quad J_n:=\nabla \calH(m_n). 
\end{align}
Because $\|( J_nC^{uu}_nJ_n^T+ h^{-1}_n\Gamma_+)^{-1}\|\leq h_n \|\Gamma_+^{-1}\|$, by Theorem \ref{thm:precision} we can estimate 
\begin{equation}
\label{eqn:Gnsize}
\|G_n\|\leq O(\|C^{uu}_n\|h_n)=O(n^{\gamma-1}).
\end{equation}
Next, recall the mean movement from EKI \eqref{eqn:EKImean} is given by
\begin{equation}
\label{eqn:Deltan}
\Delta_n:=m_{n+1}-m_n=C^{up}_n( C^{pp}_n +h_n^{-1}\Gamma_+ )^{-1}(z -\calH(m_n)).
\end{equation}
This is different from \eqref{eqn:Gn},  however we can show their difference converges to zero. To see that, recall from Theorem \ref{thm:precision}, we find the ensemble covariance $C_n^{uu}$ decreases to zero in a  well controlled manner. In particular, the particles $u^{(i)}_n$ are very close to the mean $m_n$ when $n$ is large. This indicates the ensemble spread $\Delta u^{(i)}_n=u^{(i)}_n-m_n$ is very small. We can apply a first order approximation:
\[
\calH(u^{(i)}_n)\approx \calH(m_n)+J_n \Delta u^{(i)}_n.
\]
With this approximation, we find that 
\[
C^{up}_n\approx C^{uu}_n J_n^T,\quad  C^{pp}_n\approx J_n C^{uu}_n J_n^T.
\]
Applying these approximations to \eqref{eqn:Deltan}, we recover \eqref{eqn:Gn}. More specifically, the difference between EKI mean update and Gauss--Newton update is bounded, as discussed in the following proposition.
 \begin{prop}
 \label{prop:GN}
Under the setting of Theorem \ref{thm:precision}, there is a constant $M_3$, such that for sufficiently large $n$ the following bound holds:
\[
\|G_n-\Delta_n\|\leq M_3h_n K \|C_n^{uu}\|^{\frac{3}{2}}  \|z-\calH(m_n)\|.
\]
Given the estimates in Theorem \ref{thm:precision}, the upper bound above is of order $O(n^{\frac32 \gamma-\frac32-\frac12\beta})$, which will converge to zero with large $n$.  
\end{prop}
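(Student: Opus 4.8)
The plan is to make the heuristic first-order expansion $\calH(u^{(i)}_n)\approx \calH(m_n)+J_n\Delta u^{(i)}_n$ quantitative using the second-order bound $M_2$ from Assumption \ref{aspt:obsLip}, propagate this through the empirical covariances $C^{up}_n$ and $C^{pp}_n$, and then control the effect of these perturbations on the matrix inverse in \eqref{eqn:Deltan}. First I would write, by Taylor's theorem with integral remainder, $\calH(u^{(i)}_n)-\calH(m_n)=J_n\Delta u^{(i)}_n + r^{(i)}_n$ with $\|r^{(i)}_n\|\le \tfrac12 M_2\|\Delta u^{(i)}_n\|^2$. Since $\tfrac1K\sum_i \Delta u^{(i)}_n\otimes\Delta u^{(i)}_n = C^{uu}_n$, we have $\|\Delta u^{(i)}_n\|^2\le K\,\tr(C^{uu}_n)\le K d_u\|C^{uu}_n\|$ (or more simply $\|\Delta u^{(i)}_n\|\le \sqrt{K\|C^{uu}_n\|}$ up to dimensional constants), so each remainder is $O(K\|C^{uu}_n\|)$ in norm; by Theorem \ref{thm:precision} this is $O(n^{\gamma-\beta-1})$, hence small for large $n$.

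Next I would substitute this decomposition into the definitions \eqref{eq:samp_cov} of $C^{up}_n$ and $C^{pp}_n$, obtaining $C^{up}_n = C^{uu}_n J_n^T + E^{up}_n$ and $C^{pp}_n = J_n C^{uu}_n J_n^T + E^{pp}_n$, where the error matrices $E^{up}_n, E^{pp}_n$ collect the terms involving at least one $r^{(i)}_n$ factor. Using $\|r^{(i)}_n\|\le \tfrac12 M_2\|\Delta u^{(i)}_n\|^2$ together with Cauchy--Schwarz over the ensemble average, one gets $\|E^{up}_n\|, \|E^{pp}_n\| \le C\, M_2 K \|C^{uu}_n\|^{3/2}$ (the $3/2$ power comes from pairing one factor $\|\Delta u^{(i)}_n\|\le (K\|C^{uu}_n\|)^{1/2}$ with $\tfrac1K\sum_i\|\Delta u^{(i)}_n\|^2=\tr C^{uu}_n = O(\|C^{uu}_n\|)$). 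Then $G_n - \Delta_n$ is the difference of two expressions of the form $A(B+h_n^{-1}\Gamma_+)^{-1}w$ with $w=z-\calH(m_n)$: one with $(A,B)=(C^{uu}_nJ_n^T, J_nC^{uu}_nJ_n^T)$ and one with $(A,B)=(C^{up}_n, C^{pp}_n)$. I would expand this difference via the perturbation identity, using $A_1 X_1^{-1}-A_2 X_2^{-1} = (A_1-A_2)X_1^{-1} + A_2 X_2^{-1}(X_2-X_1)X_1^{-1}$, and bound each inverse by $\|(B+h_n^{-1}\Gamma_+)^{-1}\|\le h_n\|\Gamma_+^{-1}\|$ as already noted before \eqref{eqn:Gnsize}. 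The factor $A_2$ is bounded by $\|C^{up}_n\|=O(\|C^{uu}_n\|)$, giving overall a bound of the shape $M_3 h_n K\|C^{uu}_n\|^{3/2}\|z-\calH(m_n)\|$. Finally, inserting $h_n=h_0 n^\beta$ and $\|C^{uu}_n\|=O(n^{\gamma-\beta-1})$ from Theorem \ref{thm:precision} yields the exponent $\beta + \tfrac32(\gamma-\beta-1) = \tfrac32\gamma - \tfrac32 - \tfrac12\beta$, which is negative under the hypothesis $\gamma-1\le\beta\le\gamma$ (indeed $\tfrac32\gamma-\tfrac32-\tfrac12\beta \le \gamma-\tfrac32 <0$ when additionally $\gamma\le 1$, or can be checked directly from the admissible parameter range), so the bound tends to zero.

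The main obstacle I expect is bookkeeping the error matrices $E^{up}_n$ and $E^{pp}_n$ cleanly enough to extract the sharp $\|C^{uu}_n\|^{3/2}$ rate rather than a weaker power: one must be careful to pair the remainder terms $r^{(i)}_n$ (which are quadratic in $\Delta u^{(i)}_n$, hence of size $\|C^{uu}_n\|$) with the \emph{linear} factors $J_n\Delta u^{(i)}_n$ or $\Delta u^{(i)}_n$ and use the ensemble identity $\tfrac1K\sum_i \Delta u^{(i)}_n\otimes \Delta u^{(i)}_n = C^{uu}_n$ to recover a half-power, rather than naively bounding each $\|\Delta u^{(i)}_n\|$ by $(K\|C^{uu}_n\|)^{1/2}$ everywhere (which would lose a power of $\|C^{uu}_n\|^{1/2}$ and replace it with $K^{1/2}$). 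A secondary, minor point is that Theorem \ref{thm:precision} controls $\|C^{uu}_n\|$ only via $\|C^{uu}_n\|\le \kappa_M n^{\gamma-\beta-1}\|\Sigma\|$, so all constants $M_3$ will absorb $\|\Sigma\|$, $\|\Gamma_+^{-1}\|$, $M_1$, $M_2$ and the dimensional factors; I would state $M_3$ as such a composite constant without tracking its precise value.
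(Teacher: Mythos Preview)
Your proposal is correct and follows essentially the same approach as the paper: Taylor-expand $\calH(u^{(i)}_n)$ about $m_n$, decompose $C^{up}_n$ and $C^{pp}_n$ into their linearized parts plus remainder terms controlled by $\|r^{(i)}_n\|\le\tfrac12 M_2\|\Delta u^{(i)}_n\|^2$, use Cauchy--Schwarz over the ensemble to extract the sharp $\|C^{uu}_n\|^{3/2}$ rate for the error matrices, and then combine via a matrix-inverse perturbation argument together with $\|(B+h_n^{-1}\Gamma_+)^{-1}\|\le h_n\|\Gamma_+^{-1}\|$. The only cosmetic difference is that the paper bounds the difference of inverses by citing a Neumann-series-type lemma (Lemma~A.1 of \cite{MTM19}), whereas you use the algebraic identity $A_1X_1^{-1}-A_2X_2^{-1}=(A_1-A_2)X_1^{-1}+A_2X_2^{-1}(X_2-X_1)X_1^{-1}$ directly; both yield the same leading-order bound, with the higher-order terms in $\|C^{uu}_n\|$ absorbed into $M_3$ for large $n$.
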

{\begin{rem} 
\label{rem:step-size}
Recall that in \eqref{eqn:Gnsize} we show the mean movement made by EKI is of order $n^{\gamma-1}$. So the difference between EKI and Gauss--Newton is of a lower order. Also note that the step-size parameter $\beta$  actually does not control EKI mean movement. Instead, it controls the speed of ensemble collapse as in Theorem \ref{thm:precision}, and consequentially the accuracy of EKI in approximating Gauss--Newton. 
\end{rem}}

\subsection{Iterative descent made by EKI}
While Proposition \ref{prop:GN} explains how the EKI iterates optimize a sequence of loss functions \eqref{eqn:elln}, it is unclear how the  regularized loss function \eqref{eq:prior} is optimized in the process. $\ell$ is not necessarily the limit of $\ell_n$, since $m_{n}$ is not a fixed point.  One interesting fact is that by running a Gauss-Netwon type update for $\ell_{n+1}$, the value of $\ell$ is also decreased at each step. 
To see this, we again apply the Taylor expansion 
\[
\calH(m_{n}+G_n)\approx \calH(m_n)+J_n G_n,
\] and find that
\[
 \ell (m_n+G_n)=\|\calH(m_n+G_n)-z\|_{\Gamma_+}^2\approx \ell (m_n)- 2\langle \Gamma_+^{-1}(\calH(m_n)-z), J_n G_n\rangle.
\]
The important observation here is that 
\begin{align*}
&\langle \Gamma_+^{-1}(\calH(m_n)-z), J_n G_n\rangle\\
&=(J^T_n \Gamma_+^{-1}(z -\calH(m_n)))^T[(h_n C_n^{uu})^{-1}+ J_n^T\Gamma_+^{-1} J_n]^{-1} J^T_n \Gamma_+^{-1}(z -\calH(m_n))\\
&=\|J^T_n \Gamma_+^{-1} (z -\calH(m_n))\|^2_{(h_n C_n^{uu})^{-1}+J_n^T\Gamma_+^{-1} J_n}\geq 0.
\end{align*}
It is easy to check that
\[
J^T_n \Gamma_+^{-1} (z -\calH(m_n))=-\nabla \ell (m_n).
\]
Since Proposition \ref{prop:GN} suggests that $m_{n+1}\approx m_n+G_n$, 
\[
\ell(m_{n+1})\approx \ell(m_n)-2\|\nabla\ell(m_n)\|^2_{(h_n C_n^{uu})^{-1}+J_n^T\Gamma_+^{-1} J_n}.
\]
The error of this approximation is given by the following.
\begin{prop}
\label{prop:perturb}
Under the same setting as Proposition \ref{prop:GN} we have the following estimate 
\[
\ell(m_{n+1})=\ell(m_n)-2\|\nabla \ell(m_n)\|^2_{(h_n C_n^{uu})^{-1}+J_n^T\Gamma^{-1}_+ J_n}+R_n, 
\]
where the residual is bounded by
\[
|R_n|\leq M_4 h_n \|C_n^{uu}\|^{\frac{3}{2}}\max\{\|z-\calH(m_n)\|^4,1\}.
\]
\end{prop}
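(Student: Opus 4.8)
The plan is to perform a second-order Taylor expansion of $\ell$ at the current mean $m_n$ along the EKI increment $\Delta_n=m_{n+1}-m_n$ of \eqref{eqn:Deltan}, then replace $\Delta_n$ by the Gauss--Newton step $G_n$ of \eqref{eqn:Gn} using Proposition~\ref{prop:GN}, and finally recognize the leading term as the weighted gradient norm in the statement, with everything else folded into $R_n$. The two error sources are the Taylor remainder of $\calH$ (controlled by the second-order bound in Assumption~\ref{aspt:obsLip}) and the discrepancy $\Delta_n-G_n$ (controlled by Proposition~\ref{prop:GN}), and the whole computation is made quantitative by the two-sided bound on $C^{uu}_n$ from Theorem~\ref{thm:precision}.

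Concretely, write $\calH(m_n+\Delta_n)=\calH(m_n)+J_n\Delta_n+r_n$, where Assumption~\ref{aspt:obsLip} gives $\|r_n\|\le \tfrac12 M_2\|\Delta_n\|^2$ up to a fixed dimensional factor. Substituting into $\ell(m_{n+1})=\|\calH(m_n+\Delta_n)-z\|^2_{\Gamma_+}$ and expanding the square yields
\[
\ell(m_{n+1})=\ell(m_n)+2\langle\Gamma_+^{-1}(\calH(m_n)-z),J_n\Delta_n\rangle+R_n^{(1)},
\]
with $R_n^{(1)}=2\langle\Gamma_+^{-1}(\calH(m_n)-z),r_n\rangle+\|J_n\Delta_n+r_n\|^2_{\Gamma_+}$; using $\|J_n\|\le M_1$, the fixed constant $\|\Gamma_+^{-1}\|$, and the bound on $\|r_n\|$, one obtains $|R_n^{(1)}|\le C\big(\|\Delta_n\|^2(1+\|z-\calH(m_n)\|)+\|\Delta_n\|^4\big)$ for a constant $C$ depending only on $M_1,M_2,\Gamma_+$. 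Next, split the linear term as $2\langle\Gamma_+^{-1}(\calH(m_n)-z),J_nG_n\rangle+R_n^{(2)}$ with $R_n^{(2)}=2\langle\Gamma_+^{-1}(\calH(m_n)-z),J_n(\Delta_n-G_n)\rangle$, so that $|R_n^{(2)}|\le 2\|\Gamma_+^{-1}\|M_1\|z-\calH(m_n)\|\,\|\Delta_n-G_n\|$, and Proposition~\ref{prop:GN} bounds the last factor by $M_3h_nK\|C^{uu}_n\|^{3/2}\|z-\calH(m_n)\|$. By the algebraic identity worked out just before the statement — using $J_n^T\Gamma_+^{-1}(z-\calH(m_n))=-\nabla\ell(m_n)$ together with the two equivalent forms of $G_n$ in \eqref{eqn:Gn} — the remaining term $2\langle\Gamma_+^{-1}(\calH(m_n)-z),J_nG_n\rangle$ equals $-2\|\nabla\ell(m_n)\|^2_{(h_nC^{uu}_n)^{-1}+J_n^T\Gamma_+^{-1}J_n}$, so that $R_n=R_n^{(1)}+R_n^{(2)}$.

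It then remains to bound $R_n$ by the asserted quantity. From the first form of $G_n$ in \eqref{eqn:Gn} and $\|(J_nC^{uu}_nJ_n^T+h_n^{-1}\Gamma_+)^{-1}\|\le h_n\|\Gamma_+^{-1}\|$ we get $\|G_n\|=O(h_n\|C^{uu}_n\|\,\|z-\calH(m_n)\|)$, and together with Proposition~\ref{prop:GN} also $\|\Delta_n\|=O(h_n\|C^{uu}_n\|\,\|z-\calH(m_n)\|)$ for large $n$. Feeding this into the bounds for $R_n^{(1)}$ and $R_n^{(2)}$ reduces every piece to a power of $h_n\|C^{uu}_n\|$ times a power of $\|z-\calH(m_n)\|$ of degree at most four; applying $\|C^{uu}_n\|\asymp n^{\gamma-\beta-1}$ and $h_n\|C^{uu}_n\|\asymp n^{\gamma-1}$ from Theorem~\ref{thm:precision} under $\gamma-1\le\beta\le\gamma$, and folding the various powers $\|z-\calH(m_n)\|^j$, $j\le 4$, into $\max\{\|z-\calH(m_n)\|^4,1\}$, produces $|R_n|\le M_4h_n\|C^{uu}_n\|^{3/2}\max\{\|z-\calH(m_n)\|^4,1\}$ for all sufficiently large $n$.

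The main obstacle lies in this final collection step: the Taylor remainders naturally scale like $(h_n\|C^{uu}_n\|)^2$ and $(h_n\|C^{uu}_n\|)^4$ rather than $h_n\|C^{uu}_n\|^{3/2}$, so one must exploit the precise coupling between the step-size $h_n$, the inflation $\alpha_n$, and $\|C^{uu}_n\|$ from Theorem~\ref{thm:precision}, together with the admissible range $\gamma-1\le\beta\le\gamma$, to verify that each remainder is dominated by $h_n\|C^{uu}_n\|^{3/2}$ for large $n$; the comparatively crude factor $\max\{\|z-\calH(m_n)\|^4,1\}$ is precisely what lets one bundle the different powers of the residual $z-\calH(m_n)$ without tracking them individually.
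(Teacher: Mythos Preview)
Your proposal is correct and follows essentially the same approach as the paper: Taylor-expand $\calH$ at $m_n$, split the linear term via $G_n$ to extract the weighted gradient norm, and bound the remainder using Proposition~\ref{prop:GN} together with Assumption~\ref{aspt:obsLip}. The only cosmetic difference is that the paper bounds $\|\Delta_n\|$ directly via $\|C^{up}_n\|\le\sqrt{\|C^{pp}_n\|\|C^{uu}_n\|}\le M_1\sqrt{d_u}\|C^{uu}_n\|$, whereas you route through $\|G_n\|$ and the discrepancy bound of Proposition~\ref{prop:GN}; both give the same order, and the paper is equally terse at the final ``collection step'' you flag as the main obstacle.
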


\subsection{Convergence analysis}
Classical analysis  of optimization algorithms often focus on  understanding the limiting behavior of the iterations. When the underlying loss function is strongly convex, there is a unique global minimum, so it is of interest to show the algorithms can converge to this minimizer with finite steps. Under non-convex settings, the global minimum can be non-unique, and it is more practical to ask whether the algorithm can converge to a critical point of the loss function. 

The EnKF is known to have certain stability issues \cite{TMK16}, in the sense the iterates in principle may diverge to infinity. With general observation functions, EKI can have the same phenomena. But this issue can often be fixed by modifying the algorithm, if we know  proper solutions should be bounded by a known radius $M$. Such information can often to be obtained from the physical background of the inverse problem. As a consequence, it is reasonable to modify the EKI algorithm so the particles are bounded. One simple way to achieve this is by modifying the observation map outside the radius \cite{CST18}. In particular, we have the proposition.
\begin{prop}
\label{prop:ESRF}
Suppose the observation map $\calG(u)$ takes value $\mathbf{0}$ when $\|u\|\geq M+1$,   there is a threshold iteration  $n_0$, so that the EKI sequence is bounded such that
\[
\|m_n\|\leq \max\{2M+2+\|z\|,\|m_{n_0}\|\}\quad \forall n\geq n_0. 
\]
\end{prop}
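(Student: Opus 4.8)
The plan is to use the cut-off on $\calG$ as the stabilizing mechanism: once $\|m_n\|$ is large, the real observation $\calG$ is identically zero not only at $m_n$ but on the whole ensemble, so the EKI update \eqref{eqn:EKImean} degenerates into a pure Tikhonov step that shrinks the mean toward the origin. Concretely I would prove the one-step, absorbing-ball estimate
\[
\|m_{n+1}\|\le\max\{2M+2+\|z\|,\ \|m_n\|\}\qquad\text{for all }n\ge n_0,
\]
from which the stated bound follows by an immediate induction on $n\ge n_0$ (if $\|m_n\|$ exceeds the radius it does not grow, and once it enters the radius it stays).

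First I would extract from Theorem \ref{thm:precision} a uniform control on the ensemble spread. Since $\gamma-\beta-1\le 0$, the upper bound $C^{uu}_n\preceq\kappa_M n^{\gamma-\beta-1}\Sigma$ gives $\max_i\|u^{(i)}_n-m_n\|^2\le K\,\tr(C^{uu}_n)\le Kd_u\kappa_M n^{\gamma-\beta-1}\|\Sigma\|$, which is bounded uniformly in $n$ (and tends to $0$ when $\beta>\gamma-1$). Hence there is a threshold $n_0$ such that $\max_i\|u^{(i)}_n-m_n\|\le M+1$ for all $n\ge n_0$. Consequently, whenever $\|m_n\|\ge 2M+2$ every particle satisfies $\|u^{(i)}_n\|\ge M+1$, so $\calG(u^{(i)}_n)=\mathbf{0}$ for all $i$ and $\calG(m_n)=\mathbf{0}$. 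In this regime $\Hbar_n=(\mathbf{0},m_n)$, the $\calG$-blocks of $C^{pp}_n$ and $C^{up}_n$ vanish, and \eqref{eqn:EKImean} collapses to
\[
m_{n+1}=m_n+C^{uu}_n(C^{uu}_n+h_n^{-1}\Sigma)^{-1}(\mathbf{0}-m_n)=h_n^{-1}\Sigma(C^{uu}_n+h_n^{-1}\Sigma)^{-1}m_n.
\]

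The core estimate is that this degenerate map is a contraction in the Mahalanobis norm $\|\cdot\|_\Sigma$. Writing $P=C^{uu}_n+h_n^{-1}\Sigma$, which is symmetric with $h_n^{-1}\Sigma\preceq P$ and $\Sigma\preceq h_nP$, one computes
\[
\|m_{n+1}\|_\Sigma^2=h_n^{-2}\,m_n^TP^{-1}\Sigma P^{-1}m_n\le h_n^{-1}\,m_n^TP^{-1}m_n\le m_n^T\Sigma^{-1}m_n=\|m_n\|_\Sigma^2 ,
\]
using $P^{-1}\Sigma P^{-1}\preceq h_nP^{-1}$ and then $P^{-1}\preceq h_n\Sigma^{-1}$. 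Thus $\|m_{n+1}\|_\Sigma\le\|m_n\|_\Sigma$ whenever $\|m_n\|\ge 2M+2$, which handles the large regime (modulo translating between $\|\cdot\|$ and $\|\cdot\|_\Sigma$, whose conditioning constant is absorbed into $n_0$, or one may precondition so $\Sigma=I$).

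For the complementary regime $\|m_n\|<2M+2$ I would bound a single step directly: by Theorem \ref{thm:precision} the Kalman gain $C^{up}_n(C^{pp}_n+h_n^{-1}\Gamma_+)^{-1}$ has operator norm $O(\|C^{uu}_n\|h_n)=O(n^{\gamma-1})$, hence is uniformly bounded in the admissible parameter range, while $\|z-\calH(m_n)\|\le\|z\|+\|\calH(m_n)\|$ is bounded because $m_n$ and $\calG(m_n)$ are; together with the fact that the regularization block $I-K_n^2$ of the mean-update operator again factors as $h_n^{-1}\Sigma(C^{uu}_n+h_n^{-1}\Sigma)^{-1}$ times a bounded correction, this keeps $m_{n+1}$ inside the ball of radius $2M+2+\|z\|$. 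Combining the two regimes yields the displayed one-step estimate and finishes the proof. I expect the delicate point to be exactly this moderate regime $\|m_n\|<2M+2$, where the $\calG$-observation is still active: there one cannot rely on the clean contraction and must instead carefully balance the (possibly $\calG$-induced) part of the gain against the contractive regularization part to land in the stated ball, with the precise block structure of $I-K_n^2$ and the uniform bounds on $C^{uu}_n$ from Theorem \ref{thm:precision} doing the work. Choosing $n_0$ so the ensemble radius drops below $M+1$ is the other spot where Theorem \ref{thm:precision} is indispensable.
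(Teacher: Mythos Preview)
Your overall strategy matches the paper's: use Theorem~\ref{thm:precision} to shrink the ensemble spread, split on the size of $\|m_n\|$, obtain a pure contraction once the cutoff kills $\calG$ on the entire ensemble, and bound one step otherwise. Your large-$\|m_n\|$ computation is correct; the paper simply normalizes $\Sigma=\bfI$ at the outset (so the contraction reads $m_{n+1}=[h_n^{-1}C^{uu}_n+\bfI]^{-1}m_n$ and $\|m_{n+1}\|\le\|m_n\|$ directly in the Euclidean norm), which removes your Mahalanobis detour and the conditioning constant you had to hand-wave away.

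The genuine gap is the moderate regime, which you yourself flag as the delicate point. You only assert that the Kalman gain is \emph{uniformly bounded}; that alone does not force $m_{n+1}$ into the ball of radius $2M+2+\|z\|$. With $\|m_n\|$ just below $2M+2$ and a gain of order one, the increment can be much larger than $\|z\|$, because $\|z-\calH(m_n)\|$ contains $\|m_n\|$ from the identity block of $\calH$. Your ``regularization block $I-K_n^2$'' decomposition is too vague to repair this. The paper resolves the difficulty by two simple choices you are missing: (i) it splits at the \emph{smaller} threshold $\|m_n\|\le M+1$ rather than $2M+2$, and (ii) it chooses $n_0$ so that in addition $h_n\|C^{uu}_n\|M_1^2\sqrt{d_u}<1$ (this is where $h_n\|C^{uu}_n\|=O(n^{\gamma-1})\to 0$ is actually used, not mere boundedness). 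Then
\[
\|m_{n+1}\|\le\bigl(1+h_n\|C^{uu}_n\|M_1^2\sqrt{d_u}\bigr)\|m_n\|+M_1\sqrt{d_u}\,h_n\|C^{uu}_n\|\,\|z\|\le 2(M+1)+\|z\|,
\]
landing exactly in the stated ball with no balancing act required. If you insist on your threshold $2M+2$, you must likewise strengthen the choice of $n_0$ so that the gain norm is \emph{small} enough to make the one-step increment at most $\|z\|$; uniform boundedness is insufficient.
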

The requirement that $\calG(u)$ takes value $\mathbf{0}$ when $\|u\|\geq M+1$ can be enforced for general observation function $\calG$ by multiplying it with a mollifier, for example we replace $\calG$ with
\begin{equation}
\label{eq:moll}
\widetilde{\calG}(u)= \calG(u) \exp(-C((M+1)^2-\|u\|^2)^{-1})\quad \text{where $C$ is a large constant}. 
\end{equation}
Proposition \ref{prop:ESRF} indicates it is reasonable to assume the mean sequence is bounded. Then from the descend estimate in Proposition \ref{prop:perturb}, we can show EKI will reach an  approximate critical point with finite iterations.
\begin{thm}
\label{thm:critical}
Under the setting of Theorem \ref{thm:precision}, suppose that the EKI mean sequence $m_n$ is bounded and the parameter $\gamma\in [0,1)$, then or any $\epsilon>0$,  
\[
\min_{n\leq N_\epsilon}\{\|\nabla \ell(m_n) \|\}\leq \epsilon.
\]
The threshold iteration is given by 
\[
N_\epsilon=\begin{cases} \exp(D/\epsilon^2)\quad &\text{if}\quad \gamma=0,\\
(D/\epsilon^2)^{\frac{2}{\min\{2\gamma,\beta+1-\gamma+\delta\}}} \quad &\text{if}\quad 1>\gamma>0.
\end{cases}
\]
$D$ here is a constant independent of $\epsilon$. 
\end{thm}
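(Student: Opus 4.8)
\section*{Proof proposal for Theorem~\ref{thm:critical}}

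The plan is to iterate the one-step descent estimate of Proposition~\ref{prop:perturb}, sum it over $n$, and then invert the resulting rate, using Theorem~\ref{thm:precision} to control the size of the descent coefficient and of the residual $R_n$, and the assumed boundedness of $m_n$ (itself guaranteed by Proposition~\ref{prop:ESRF} after mollification) to control $\|z-\calH(m_n)\|$. Throughout, since Proposition~\ref{prop:perturb} and the spectral estimate below are only claimed for $n$ beyond some fixed $n_0$, I would carry out the summation starting from $n_0$; this loses only finitely many terms, and $\ell(m_{n_0})\le\|\Gamma_+^{-1}\|\,B^2$ is a fixed constant once $\|z-\calH(m_n)\|\le B$ is known.

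First I would record the consequences of the hypotheses for the quantities in Proposition~\ref{prop:perturb}. Since $m_n$ is bounded and $\calH$ is globally Lipschitz with constant $M_1$ (Assumption~\ref{aspt:obsLip}), $\|z-\calH(m_n)\|\le B$ uniformly in $n$ for some constant $B$, hence $\max\{\|z-\calH(m_n)\|^4,1\}\le B'$. Combining this with $\|C^{uu}_n\|\le\kappa_M\|\Sigma\|\,n^{\gamma-\beta-1}$ from Theorem~\ref{thm:precision} and $h_n=h_0n^\beta$ gives $|R_n|\le C_R\,n^{\frac32\gamma-\frac12\beta-\frac32}$ for a constant $C_R$. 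Next I would lower-bound the descent term. With the Mahalanobis convention, $\|\nabla\ell(m_n)\|^2_{(h_nC^{uu}_n)^{-1}+J_n^T\Gamma^{-1}_+J_n}=\nabla\ell(m_n)^T\big[(h_nC^{uu}_n)^{-1}+J_n^T\Gamma^{-1}_+J_n\big]^{-1}\nabla\ell(m_n)$, so the point is a positive-semidefinite lower bound on the inverse matrix. From $\|J_n\|\le M_1$ one has $J_n^T\Gamma^{-1}_+J_n\preceq M_1^2\|\Gamma^{-1}_+\|I$, while Theorem~\ref{thm:precision} gives $(h_nC^{uu}_n)^{-1}\preceq c_1 n^{1-\gamma}I$; since $\gamma<1$ the growing term dominates the bounded curvature term once $n\ge n_0$, so $\big[(h_nC^{uu}_n)^{-1}+J_n^T\Gamma^{-1}_+J_n\big]^{-1}\succeq c_2 n^{\gamma-1}I$. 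Hence, for all large $n$,
\[
\ell(m_{n+1})\le \ell(m_n)-2c_2\,n^{\gamma-1}\|\nabla\ell(m_n)\|^2+C_R\,n^{\frac32\gamma-\frac12\beta-\frac32}.
\]

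Summing this telescoping inequality from $n_0$ to $N$ and using $\ell\ge 0$ yields
\[
2c_2\sum_{n=n_0}^{N}n^{\gamma-1}\|\nabla\ell(m_n)\|^2\le \ell(m_{n_0})+C_R\sum_{n=n_0}^{N}n^{\frac32\gamma-\frac12\beta-\frac32}.
\]
Bounding the left side below by $\big(\min_{n\le N}\|\nabla\ell(m_n)\|^2\big)\sum_{n=n_0}^{N}n^{\gamma-1}$ and estimating the two power sums by integral comparison — $\sum_{n=n_0}^{N}n^{\gamma-1}\asymp N^{\gamma}$ for $\gamma>0$ and $\asymp\log N$ for $\gamma=0$, and the residual sum is $O(1)$, $O(\log N)$, or $O\big(N^{\frac32\gamma-\frac12\beta-\frac12}\big)$ according as its exponent is below, equal to, or above $-1$ — I obtain, for $\gamma>0$,
\[
\min_{n\le N}\|\nabla\ell(m_n)\|^2\le D\Big(N^{-\gamma}+N^{-\frac12(\beta+1-\gamma)+\delta}\Big)\le 2D\,N^{-\frac12\min\{2\gamma,\,\beta+1-\gamma\}+\delta},
\]
where the arbitrarily small $\delta$ absorbs the possible logarithmic factor at the boundary exponent and reflects the requirement $\beta>\gamma-1$ (for $\beta=\gamma-1$ the residual accumulates at the same rate $N^{\gamma}$ as the descent and naive telescoping fails); and $\min_{n\le N}\|\nabla\ell(m_n)\|^2\le D/\log N$ when $\gamma=0$ (here $\beta>\gamma-1=-1$ makes the residual sum $O(1)$). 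Taking square roots and solving for $N$ the inequalities $\sqrt{2D}\,N^{-\frac14\min\{2\gamma,\beta+1-\gamma\}+\delta/2}\le\epsilon$ and $\sqrt{D/\log N}\le\epsilon$ gives exactly $N_\epsilon=(D/\epsilon^2)^{2/\min\{2\gamma,\beta+1-\gamma+\delta\}}$ when $1>\gamma>0$ and $N_\epsilon=\exp(D/\epsilon^2)$ when $\gamma=0$; enlarging $D$ covers small $\epsilon$-independent corrections and ensures $N_\epsilon\ge n_0$. The main obstacle is precisely this exponent bookkeeping: the descent coefficient $n^{\gamma-1}$ decays, so $\sum n^{\gamma-1}$ diverges only like $N^\gamma$ (and merely like $\log N$ at $\gamma=0$), and one must verify that the accumulated residual stays asymptotically negligible against this slow accumulation — this is what pins down the admissible range of $\beta$, produces the slack $\delta$, and explains why $\gamma=0$ yields only an exponential iteration count; a secondary, purely technical point is tracking that the "sufficiently large $n$" caveats in Proposition~\ref{prop:perturb} and in the spectral lower bound cost only an additive constant.
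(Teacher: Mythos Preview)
Your proposal is correct and follows essentially the same route as the paper: start from the one-step descent of Proposition~\ref{prop:perturb}, use Theorem~\ref{thm:precision} together with $\|J_n\|\le M_1$ to bound $(h_nC^{uu}_n)^{-1}+J_n^T\Gamma_+^{-1}J_n\preceq O(n^{1-\gamma})I$ and hence the Mahalanobis term from below by $cn^{\gamma-1}\|\nabla\ell(m_n)\|^2$, use boundedness of $m_n$ and Theorem~\ref{thm:precision} to bound $|R_n|\le D_1 n^{\frac32\gamma-\frac32-\frac12\beta}$, telescope, compare the two power sums by integration, and invert. The paper phrases the final step as a contradiction (assume $\|\nabla\ell(m_n)\|>\epsilon$ for all $n\le N$ and show this forces $N$ below the stated $N_\epsilon$), whereas you bound $\min_n\|\nabla\ell(m_n)\|^2$ directly; these are equivalent, and your explicit handling of the $\delta$-slack at the boundary exponent is in fact more careful than the paper's own write-up.
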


In addition if we assume the loss function is strongly convex, then we have the following theorem which establishes convergence to the global minimizer. The theorem also states that the convergence is attained at a polynomial rate.  

\begin{thm}
\label{thm:convex}
Under the same setting as Theorem \ref{thm:precision}, suppose in addition that  $\ell(u)$ is strongly convex, so there is a $\lambda_c>0$ such that for any vectors $x,y$
\[
\ell(x)-\ell(y)\geq \langle \nabla \ell(y), x-y\rangle+\lambda_c \|x-y\|^2. 
\]
Then there is a threshold iteration $n_0$ and constant $D$ so that the following estimates hold for any $N\geq n_0$:
\begin{enumerate}[1)]
\item If $\gamma=0$, for any $\alpha<\min\{\frac12+\frac12\beta, h_0 \kappa_m \sigma_m\lambda_c \}$, 
\[
\lambda_c \|m_N-u^*\|^2\leq \ell (m_N)-\ell (u^*)\leq \frac{D}{N^\alpha}. 
\]
Here $\kappa_m$ is given by Theorem \ref{thm:precision} and $\sigma_m$ is the minimum eigenvalue of $\Sigma$. 
\item If $1>\gamma>0$, for any $\alpha<\frac12+\frac12\beta-\frac12 \gamma$, 
\[
\lambda_c \|m_N-u^*\|^2\leq \ell(m_N)-\ell(u^*)\leq \frac{D}{N^\alpha}. 
\]
\end{enumerate}
\end{thm}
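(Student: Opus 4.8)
The plan is to combine the per-step descent estimate of Proposition \ref{prop:perturb} with strong convexity of $\ell$ to produce a recursive inequality for the optimality gap $e_N := \ell(m_N) - \ell(u^*)$, and then solve that recursion asymptotically. First I would use strong convexity to relate the gradient norm to the gap: the Polyak--Łojasiewicz-type inequality $\|\nabla \ell(m_n)\|^2 \geq 2\lambda_c(\ell(m_n) - \ell(u^*))$, which follows by minimizing both sides of the strong convexity inequality $\ell(x) - \ell(m_n) \geq \langle \nabla \ell(m_n), x - m_n\rangle + \lambda_c\|x-m_n\|^2$ over $x$ (the minimizer gives $-\frac{1}{4\lambda_c}\|\nabla\ell(m_n)\|^2 \geq \ell(u^*) - \ell(m_n)$, actually $\geq -\frac{1}{4\lambda_c}\|\nabla\ell\|^2$; I will be careful with the constant). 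Next I need a lower bound on the weighted norm appearing in Proposition \ref{prop:perturb}: since $(h_n C_n^{uu})^{-1} + J_n^T\Gamma_+^{-1}J_n \succeq (h_n C_n^{uu})^{-1} \succeq (h_n \kappa_M n^{\gamma-\beta-1}\sigma_M)^{-1} I$ (using the upper bound $C_n^{uu} \preceq \kappa_M n^{\gamma-\beta-1}\Sigma$ from Theorem \ref{thm:precision} and $\sigma_M = \|\Sigma\|$), I get $\|\nabla\ell(m_n)\|^2_{(h_nC_n^{uu})^{-1}+J_n^T\Gamma_+^{-1}J_n} \geq c\, n^{\beta+1-\gamma}\|\nabla\ell(m_n)\|^2$ for the case $\gamma > 0$, and for $\gamma = 0$ the lower bound $C_n^{uu} \succeq \kappa_m \sigma_m \Sigma \cdot n^{-\beta-1}$... wait, rather I would keep $(h_n C_n^{uu})^{-1} \succeq \frac{1}{h_0 n^\beta \kappa_M n^{\gamma-\beta-1}\sigma_M} I = \frac{n^{1-\gamma}}{h_0\kappa_M\sigma_M} I$, so combining with the P-Ł inequality, $\|\nabla\ell(m_n)\|^2_{(h_nC_n^{uu})^{-1}+\cdots} \geq \frac{2\lambda_c}{h_0\kappa_M\sigma_M} n^{1-\gamma} e_n$ in the $\gamma>0$ case. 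For $\gamma = 0$ the sharper constant in the statement (involving $\kappa_m$ and $\sigma_m$) must come from also exploiting $J_n^T\Gamma_+^{-1}J_n \succeq (C_n^{uu})^{-1}$-type terms together with the \emph{lower} covariance bound; I would need to unpack which term dominates.

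With these in hand, Proposition \ref{prop:perturb} yields
\[
e_{n+1} \leq e_n - c_1 n^{1-\gamma} e_n + |R_n|, \qquad |R_n| \leq M_4 h_0 n^\beta \cdot (\kappa_M \sigma_M)^{3/2} n^{\frac32(\gamma-\beta-1)} \max\{\|z-\calH(m_n)\|^4, 1\}.
\]
Since the mean sequence is assumed bounded (justified by Proposition \ref{prop:ESRF}), and $\calH$ is Lipschitz by Assumption \ref{aspt:obsLip}, $\|z - \calH(m_n)\|$ is uniformly bounded, so $|R_n| \leq c_2 n^{\beta + \frac32\gamma - \frac32\beta - \frac32} = c_2 n^{\frac32\gamma - \frac12\beta - \frac32}$. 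Then I have a scalar recursion $e_{n+1} \leq (1 - c_1 n^{1-\gamma}) e_n + c_2 n^{-\rho}$ with $\rho = \frac32 - \frac32\gamma + \frac12\beta$. For $\gamma > 0$ the contraction factor $n^{1-\gamma}$ eventually exceeds any bound, so after a threshold $n_0$ one has genuine contraction; the standard lemma for such recursions (comparing against $D/N^\alpha$ and checking the induction step $(1-c_1 n^{1-\gamma})\frac{D}{n^\alpha} + c_2 n^{-\rho} \leq \frac{D}{(n+1)^\alpha}$, using $(n+1)^{-\alpha} \geq n^{-\alpha}(1 - \alpha/n)$) gives decay at rate $N^{-\alpha}$ for any $\alpha$ up to the competing exponent $\rho - (1-\gamma) = \frac12 + \frac12\beta - \frac12\gamma$, matching the claimed bound. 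For $\gamma = 0$ the contraction factor is the constant $c_1 = h_0\kappa_m\sigma_m\lambda_c$ (this is where the $\gamma=0$ constant must appear, so I would redo the gradient-norm lower bound using the \emph{lower} covariance bound $C_n^{uu} \succeq \kappa_m n^{-\beta-1}\sigma_m I$ inside $h_n^{-1}(C_n^{uu})^{-1} \cdots$ — hmm, that gives an \emph{upper} bound on $(C_n^{uu})^{-1}$, so actually the relevant inequality is $(h_nC_n^{uu})^{-1} \succeq \frac{1}{h_0 n^\beta \kappa_M n^{-\beta-1}\sigma_M}I$; I will need to reconcile this with the stated constant, possibly the $\kappa_m,\sigma_m$ enter through a different route such as a direct strong-convexity argument on $\ell_{n+1}$). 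The recursion becomes $e_{n+1} \leq (1-c_1 n^0)e_n + c_2 n^{-\rho}$ with $\rho = \frac32 + \frac12\beta$; iterating a constant-contraction recursion with polynomially decaying forcing gives $e_N = O(N^{-\rho+1}) = O(N^{-\frac12-\frac12\beta})$ provided $c_1 < 1$, explaining the cap $\alpha < \frac12 + \frac12\beta$, while the other cap $\alpha < c_1 = h_0\kappa_m\sigma_m\lambda_c$ reflects the requirement that the contraction be strong enough. Finally the left inequality $\lambda_c\|m_N - u^*\|^2 \leq \ell(m_N) - \ell(u^*)$ is immediate from strong convexity applied with $x = m_N$, $y = u^*$, using $\nabla\ell(u^*) = 0$.

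The main obstacle I anticipate is \textbf{controlling the residual $R_n$ tightly enough and handling the regime boundaries}: the bound $|R_n| \leq M_4 h_n\|C_n^{uu}\|^{3/2}\max\{\|z-\calH(m_n)\|^4,1\}$ is only useful once boundedness of $m_n$ is invoked, and even then the exponent bookkeeping is delicate because I must verify that the forcing exponent $\rho$ strictly exceeds the "effective decay budget" $1-\gamma + \alpha$ over the allowed range $\gamma - 1 \leq \beta \leq \gamma$ — in particular checking the recursion lemma's induction step does not fail near $\beta = \gamma-1$. A secondary subtlety is pinning down the exact constant in the $\gamma = 0$ case so that the threshold $h_0\kappa_m\sigma_m\lambda_c$ appears as stated rather than a constant involving $\kappa_M, \sigma_M$; this likely requires lower-bounding the descent term via the lower covariance bound of Theorem \ref{thm:precision} in a way that does not get wasted through $J_n^T\Gamma_+^{-1}J_n \succeq 0$ alone. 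Once the scalar recursion is established with the right constants, the remaining analysis is a routine (if careful) induction.
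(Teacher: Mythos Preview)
Your overall architecture --- combine Proposition~\ref{prop:perturb} with a Polyak--\L{}ojasiewicz inequality from strong convexity, derive a scalar recursion for $e_n=\ell(m_n)-\ell(u^*)$, and then analyze that recursion --- is exactly the route the paper takes. However, you have the direction of the key inequality reversed, and this propagates into a qualitatively wrong recursion.

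Recall the paper's convention $\|v\|_A^2=v^T A^{-1}v$. To \emph{lower} bound the descent term
\[
\|\nabla\ell(m_n)\|^2_{(h_nC_n^{uu})^{-1}+J_n^T\Gamma_+^{-1}J_n}
\]
you need an \emph{upper} bound on the weight matrix $(h_nC_n^{uu})^{-1}+J_n^T\Gamma_+^{-1}J_n$, which in turn requires the \emph{lower} covariance bound $C_n^{uu}\succeq \kappa_m n^{\gamma-\beta-1}\Sigma$ from Theorem~\ref{thm:precision}. This is precisely why the constants $\kappa_m,\sigma_m$ (and not $\kappa_M,\sigma_M$) appear in the statement --- so your ``secondary subtlety'' is actually the heart of the matter. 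Carrying this through gives
\[
(h_nC_n^{uu})^{-1}+J_n^T\Gamma_+^{-1}J_n\preceq \bigl(h_0^{-1}\kappa_m^{-1}\sigma_m^{-1}n^{1-\gamma}+M_1^2\gamma_m^{-1}\bigr)I,
\]
hence the weighted gradient norm is $\gtrsim h_0\kappa_m\sigma_m\, n^{\gamma-1}\|\nabla\ell(m_n)\|^2$, and the recursion is
\[
e_{n+1}\le \bigl(1-\lambda_c c\, n^{\gamma-1}\bigr)e_n+D_1 n^{\frac32\gamma-\frac32-\frac12\beta},\qquad c=h_0\kappa_m\sigma_m.
\]
The exponent is $n^{\gamma-1}$, not $n^{1-\gamma}$ as you wrote. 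For $0<\gamma<1$ the contraction factor \emph{decays to zero}, so your claim that ``after a threshold $n_0$ one has genuine contraction'' is false (indeed $(1-c_1 n^{1-\gamma})$ would eventually be negative). The recursion must instead be unwound explicitly: the paper writes $e_N\le S_1+S_2$ via discrete Gronwall, bounds the product $\prod(1-\lambda_c c\,n^{\gamma-1})$ by $\exp(-\tfrac{\lambda_c c}{\gamma}N^\gamma)$ for $S_1$, and handles $S_2$ by a telescoping comparison showing $(j+1)^{-\alpha}\prod_{n>j}(\cdot)-j^{-\alpha}\prod_{n\ge j}(\cdot)\ge j^{\frac32\gamma-\frac32-\frac12\beta}\prod_{n>j}(\cdot)$ for large $j$, which after Taylor expansion is exactly where the condition $\alpha<\frac12+\frac12\beta-\frac12\gamma$ (and, for $\gamma=0$, also $\alpha<\lambda_c c$) emerges. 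Once you fix the exponent sign, your induction-step check is essentially this same computation.
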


%

\section{Numerical results}
\label{sec:numerical}
{In this section we present several experiments assessing the performance of EKI. This will include monitoring the effect of the additive covariance inflation and non-constant step-sizes, whilst accounting for the computational time taken. {To highlight this we also compare EKI with the vanilla Tikhonov EKI and the GN method}. We test the inversion performance on both the Lorenz 96 model and a nonlinear partial differential equation (PDE) from the field of geosciences.   {Before describing in detail the experiments we present the different test models with their corresponding inverse problem, and discuss how we propose to compare the different methodologies introduced.} }


 \subsection{Test models}

\subsubsection{Lorenz 96 model}
The first test problem is the Lorenz 96 (L96) model \cite{ENL96}. The L96 model is a dynamical system designed to describe equatorial waves in atmospheric science. The L96 model takes the form
\begin{equation}
\label{eq:l96}
\begin{gathered}
\frac{dv_k}{dt} = v_{k-1}(v_{k+1} - v_{k-2}) - v_k + F, \quad k=1,\ldots,N, \\
v_0 = v_N, \quad v_{N+1} = v_1, \quad v_{-1} = v_{N-1}.
\end{gathered}
\end{equation}
In \eqref{eq:l96}, $v_k$ denotes the current state of the system at the $k$-th grid point. $F$ is a forcing constant with default value $8$. The dimension $N$ is often chosen as $40$, but other large numbers can also be used. {The initial condition $$v(0)=(v_1(0),\ldots,v_N(0))^T,$$ of \eqref{eq:l96} is randomly sampled from the Gaussian approximation of the equilibrium measure of L96.} {We also generate the initial ensemble for EKI from the same Gaussian distribution.}
The associated inverse problem is the recovery of $v(0)$  using 20 noisy partial measurements at time $t=0.3$ 
\begin{equation}
\label{eq:o}
y_k = v_{2k-1}(t) + \eta_k.
\end{equation}
{This inverse problem is a standard test problem for data assimilation. It is also a good testbed for high dimensional Bayesian computational methods, since the dimension $N$ can take arbitrary large values \cite{LT19,MTM19}.} {To solve the L96 model we use a fourth order Runge--Kutta method with step size $h_{\textrm{L96}} = 0.01$.}




\subsubsection{1D Darcy's law}

The second test problem will be a nonlinear PDE motivated from geosciences referred to as Darcy's law. Assume that in a domain $\mathcal{D}$ we have a source field $f \in L^{\infty}(\mathcal{D})$ and a diffusion coefficient $\kappa \in L^{\infty}(\mathcal{D})$, referred to as the permeability, then the forward problem is concerned with solving $p \in H^1_0(\mathcal{D})$, known as the pressure,  in the PDE 
\begin{align}
\label{eq:darcy}
-\nabla\cdot({\kappa}\nabla p) &= f, \quad x \in  \mathcal{D}, \\
p &= 0, \quad x \in  \partial \mathcal{D}. \nonumber
\end{align}
{We impose a Dirichlet boundary condition on the PDE} and  the inverse problem concerned with \eqref{eq:darcy} is the recovery of the permeability $\kappa$ from measurements of the pressure $p$ at 25 equidistance locations $\{x_j\}_{j=1}^{25}$. The associated inverse problem is then defined as
\begin{equation}
\label{eq:func}
 y_j =p(x_j) + \eta_j, \quad j=1,\cdots,J, 
\end{equation}
where the $\eta_j$ are Gaussian noises and assumed independent.
By defining $\mathcal{G}_j(u) = p(x_j)$, we can rewrite \eqref{eq:func} as the inverse problem 
\begin{equation*}
y = \mathcal{G}(u) + \eta, \quad \eta \sim N(0,\Gamma).
\end{equation*}
{As a starter, we will consider a 1D Darcy flow problem,  where the domain is given as $\mathcal{D}=[0,\pi]$. By testing on this toy problem, we can have a more direct observation of the recovery skill with different parameter setups, as shown later by Figure \ref{fig:PDE}.}
 The initialization of the ensemble is taken to be a mean-zero Gaussian with covariance function $C(x,x') = 5\exp\big(-\frac{|x-x'|}{20}\big)$, which is a common covariance function used in the context of uncertainty quantification \cite{LPS14}.  {To solve the Darcy's law model we use a centered finite-difference method with a mesh of $h_{\textrm{PDE}} = 1/100$.}

\subsubsection{2D Darcy's law}
Our final test problem is Darcy's law as stated through \eqref{eq:darcy}, but we consider in two dimension. As before we are interested in the recovery of the permeability $\kappa$ in the domain $\mathcal{D} = [0,1]^2$ where we specify $8 \times 8 =64$ equidistant pointwise observations. However one key difference in the 2D setting is the initialization of TEKI. We follow the setting described in \cite{CST18} and draw each initial ensemble member through the series expansion
\begin{equation}
\label{eq:KL}
u=\sum_{k \in \mathbb{Z}_+^2} \sqrt{\lambda_k}\xi_k \varphi_k(x), \quad \xi_k \sim N(0,1), \quad\rm{i.i.d.,}\
\end{equation}
\eqref{eq:KL} is known as the Karhunen-Lo\`{e}ve (KL) expansion, where $\varphi_k$ and $\lambda_k$ are the respective eigenfunctions and eigenvectors of the covariance operator $C$, defined as
$$\varphi_k(x)=\sqrt{2}\sin(\pi {\langle k, x\rangle}), \quad \lambda_k=\left(|k|^2\pi^2+\tau^2\right)^{-\nu}, \quad k \in \mathbb{Z}_+^2.$$
The corresponding KL expansion \eqref{eq:KL}  satisfies the eigenvalue problem $C\varphi_k=\lambda_k \varphi_k,$ where our covariance operator is defined as
$$C=\left(-\triangle+\tau^2\right)^{-\nu},$$
with $\triangle$ denoting the Laplacian operator,
 such that $\nu \in \mathbb{R}^{+}$ denotes the regularity and $\tau \in \mathbb{R}^{+}$ denotes the inverse lengthscale. {For the tests below, $\tau=15$. For simplicity, we initiate EKI with KL basis vectors, in other words we let}
\begin{equation}
\label{eq:kl_basis}
u^{(k)}(x)=\varphi_{k}(x), \quad  k=1,\ldots,K.
\end{equation}
This has been shown in \cite{CST18} to work well in the context of TEKI. The truth for this test model is given in Figure \ref{fig:truth2D}. We specify the observation noise covariance as in the 1D Darcy flow experiments.

\subsection{Parameter settings for the EKI methods}
\label{ssec:param_sett}

{To test our modified version of EKI \eqref{eqn:EKImean}, we monitor the effect of our implemented step-size $h_n=h_0 n^\beta$, and our additive covariance inflation} 
\begin{equation}
\label{eq:inflat}
{\alpha^2_n = \alpha^2_0h^{-1}_0n^{2\gamma-\beta-2}}.
\end{equation}
{As the form of our covariance inflation \eqref{eq:inflat} includes the step-size $h_n$, we consider different cases for our inflation based on modifying the parameters $\beta$ and $\gamma$. The parameter $\beta$ controls the step-size $h_n=h_0 n^\beta$. The other parameter $\gamma$ arises from \eqref{eq:inflat} and controls the inflation. In order for Theorems  \ref{thm:critical} and  \ref{thm:convex} to apply, we must constrain our parameters to 
\begin{equation*}
0 < \gamma < 1 , \quad \gamma-1\leq\beta\leq \gamma.
\end{equation*} }
{We consider ten different setup cases which are provided in Table 1. The first five setups monitor the effect of $\beta$ while the last fiver monitor the parameter $\gamma$. To see the effect of the non-constant step-size $h_n$ we also test a setup where $\beta=0$ resulting in a constant step-size. We set the initial step-size as $h_0=0.5$ and the initial inflation factor as $\alpha_0 = 0.2$. From our analysis, Theorem \ref{thm:convex} suggests that taking a nonzero $\beta$ can result in a faster, and improved performance. 
For the later five setups (Theorem \ref{thm:convex} scenario 2) suggests 
 increasing $\gamma$ with a fixed $\beta$ leads to worse performance. 
As a comparison, we also implement the original vanilla TEKI, which is labeled as VTEKI in the figures and tables. The stepsize is fixed as $h=0.5$ and there is no covariance inflation. We compare our results with the case of no additive covariance inflation, i.e. $\alpha_0 =0$ from the inflation formula \eqref{eq:inflat}, which we label as VTEKI (vanilla Tikhonov EKI).}
{
\begin{table}[h!]
  \begin{center}
    \begin{tabular}{|c|cc|c|cc|c}
       \hline
     \textbf{Setup} & $\beta$ & $\gamma$ &\textbf{Setup}  & $\beta$ & $\gamma$   \\ 
      \hline
 1  & 0 & 0.9 &6  & 0.2 & 0.2\\  
    2  & 0.2 & 0.9 &7 & 0.2 & 0.3\\  
       3  & 0.4 & 0.9 &8  & 0.2 & 0.5\\  
       4  & 0.6 & 0.9 &9  & 0.2 & 0.7\\  
        5  & 0.8 & 0.9 &10  & 0.2 & 0.9\\  
   \hline 
    \end{tabular}
    \bigskip
        \caption{Setups of EKI based on parameter choice for $\beta$ and $\gamma$.}
              \label{table:diff_cases}
  \end{center}
\end{table}
 The effect of modifying $K$ and $J$ has been well documented in \cite{CIRS17,MAI16}. 
 {For the implementation of our method we will use the transform ensemble Kalman filter \cite{BEM01}. } Note that our model also requires modification for additive covariance inflation, such a modification is documented similarly in \cite{HW05,RCB15}. {As mentioned our work is not restricted to a particular ESRF methodology, of which further details  can be found in \cite{CR13}}. We set our initial covariance inflation as $\alpha_0=0.2$. {The iterative procedure is described by Algorithm \ref{alg:1} in the appendix}. For all test experiments we place the regularization parameter as ${\lambda}$=2. For the noise of the inverse problem we set $\Gamma= \iota^2I$ with $\iota=0.01$. We set the iterative model to run for $n=23$ iterations. }

{We pick the algorithm parameters so that the assumptions of the theoretical results are met. 
The first assumption we made is that the ensemble size must be greater than the dimension of $u$, i.e.
$K > d_u.$ In particular, our ensemble size for each test problem is fixed as $K=\{50,50,200\}$, where the dimension for each test problem is $d_u = \{40, 40, 150\}$. We also choose 
\[
0 < \gamma < 1 , \quad \gamma-1\leq\beta\leq \gamma,
\]
for the inflation factor in Theorem \ref{thm:precision} and Theorem \ref{thm:convex}. We will choose different test cases from Table \ref{table:diff_cases} to see the effect of modifying such parameters on the convergence. 
Finally, in Proposition \ref{prop:ESRF} we assume that $\mathcal{G}(u)$ takes value \textbf{0}, either through the mollification \eqref{eq:moll}, or by choosing $\|u\| \geq M +1$ large enough. When it is chosen one or two magnitude larger than the typical solution $\|u\|$, the mollified version is essentially the same as the one without. For our experiments we have tested this for $M=1000$,  and $C=2000$, however we  emphasize that from our observation, it is mainly a technicality used in the theoretical discussion. We finally note that all these conditions can be easily checked through numerical verification.}

\subsection{{Gauss--Newton implementation}}

{
Section \ref{sec:GN} reveals the connection between EKI and the Gauss--Newton (GN) method. We also implement GN on the test problems mentioned above as a comparison with the EKI methods. Directly implementing \eqref{eqn:GNupdate} on $G(v)=\Gamma_+^{-1/2}\calH$, we find that the GN iterates are given by
\[
v_{n+1}=v_n-[J_n^T\Gamma_+^{-1}J_n]^{-1}J_n\Gamma^{-1/2}_+(\calH(v_n)-z).
\]
%
where $J_n:=\nabla \calH(v_n)$ denotes the Jacobian of the forward map $\mathcal{H}$. It is important to note that GN method relies on the computation of the derivatives, while EKI methods are derivative-free.}

{Since the test problems we consider here are relatively simply, their Jacobian matrices have explicit formulations. For the L96 model, note that given any ODE  $v(t)=v(t,u)$ described by a initial value problem
\[
\frac{d}{dt} v(t)=\Phi(v(t)),\quad v(0)=u,
\]
its Jacobian $Q(t)=\nabla_u v(t,u)$ is given by a matrix-valued ODE:
\[
\frac{d}{dt}Q(t)=\nabla \Phi(v(t))Q(t),\quad Q(0)=I_{d_u}. 
\]
The value of $Q(t)$ can be obtained by ODE methods such as fourth order Runge--Kutta.
With the observation map $\mathcal H(u)=[G v(t,u);u]$, the composite Jacobian can be obtained through $\nabla \calH (v)=[GQ(t);I]$. For the Darcy flow equations, the Jacobian can be obtained through a common approach known as the adjoint method or equation. The adjoint equation can be derived through the theory of Lagrangian multipliers \cite{NW06, SAN05}.
The particular formulation of $J_n$ can be found in \cite{ORL08} section 9.2. We avoid giving out the technical details, because they are long and a digression from our main interest in this paper.
}

\subsection{{Layout of numerical results}}

{Our numerical results will consist of a number of measurements. We will first plot the reconstruction of the underlying truth for each experiment. This will be done for the first five test cases of each model. We will then proceed to plot the relative errors of each test case, where we monitor (i) the relative $l_2$ error of the mean $m_N$ with respect to the truth $m^*$, i.e.
\begin{equation*}
\frac{\|m_N - m^{*}\|_{l_2}}{\|m^*\|_{l_2}},
\end{equation*}
and  (ii) the relative error evaluated in the loss function $\ell$, i.e. $\ell(m_N)$. The relative error plots will also have corresponding tables of the relative error at the end of the experiment for each model as well as the computational cost. For each test model we will compare all test cases with the GN method. These will be shown in the reconstruction and relative error plots.}


\subsection{Inversion results}
{Figures \ref{fig:Lorenz_1}, \ref{fig:PDE} and \ref{fig:reconstruction_2D} demonstrate the reconstruction of the unknown for each model problem. To highlight the effect of the non-constant step-size and variance inflation we plotted the first 5 setups and the case of VTEKI. As seen from the Figures \ref{fig:Lorenz_1} and \ref{fig:PDE},  as we increase $\beta$ we tend to get a better reconstruction which is closer to the truth. A more in-depth analysis of the effect of the step-size and covariance inflation is provided in Tables \ref{table:cases1}, \ref{table:cases2} and \ref{table:cases3}, where we consider the different setups. Throughout each model we notice that as we increase $\beta$ and decrease $\gamma$ we see an improvement in learning the unknown. {The tables also demonstrate the computational cost associated with  TEKI.}

This is verified further through Figures \ref{fig:error_L96}, \ref{fig:error_pde} and  \ref{fig:errors_2D} which highlight the relative errors. For the error plots we see that the decay of the error is faster than the rate $N^{-\alpha}$. We choose $\alpha=0.049$, which satisfies $\alpha<\frac12+\frac12\beta-\frac12 \gamma$ for the first 5 setups.  Note that the error eventually plateaus. The reason for this is that we plot the error w.r.t. the truth, while the convergence result in Theorem \ref{thm:convex} considers the error w.r.t. the minimizer. For the latter 5 setups, the error plots are similar, so we do not show them here. {We also remark that in all these plots, there are no overfitting occur, as the relative errors decrease with iterations. This is due to the Tikhonov  regularization incorporated \cite{CST18}.}

{With all of our test models we have also implemented the GN method. Overall we find that GN has similar performance with the modified EKI methods in terms of relative errors, and it performs better asymptotically in terms of the loss function $\ell$. This is mainly because it has access to the exact Jacobian. In comparison, EKI is using an approximated descent. Moreover, recall that EKI is optimizing $\ell_{n+1}$ in \eqref{eqn:elln} instead of the loss function $\ell$ itself, so GN has a more direct optimization effect on $\ell$. Finally, even though we found the explicit formulation of Jacobian in the two simple test examples,  GN takes roughly twice as much wall clock time to finish. Finally we see that the GN is only slightly more expensive, but this increases with each numerical experiment. This is due to the cost of algorithm in computing the derivatives.}

Our experiments use different initial ensembles for the 1D problems hence we see different errors for the first iteration. This is to show that the method is robust under different initializations. We have conducted identical experiments which have the same initialization, and we see a similar performance with respect to each setup.}

\begin{figure}[h!]
\centering
\includegraphics[scale=0.35]{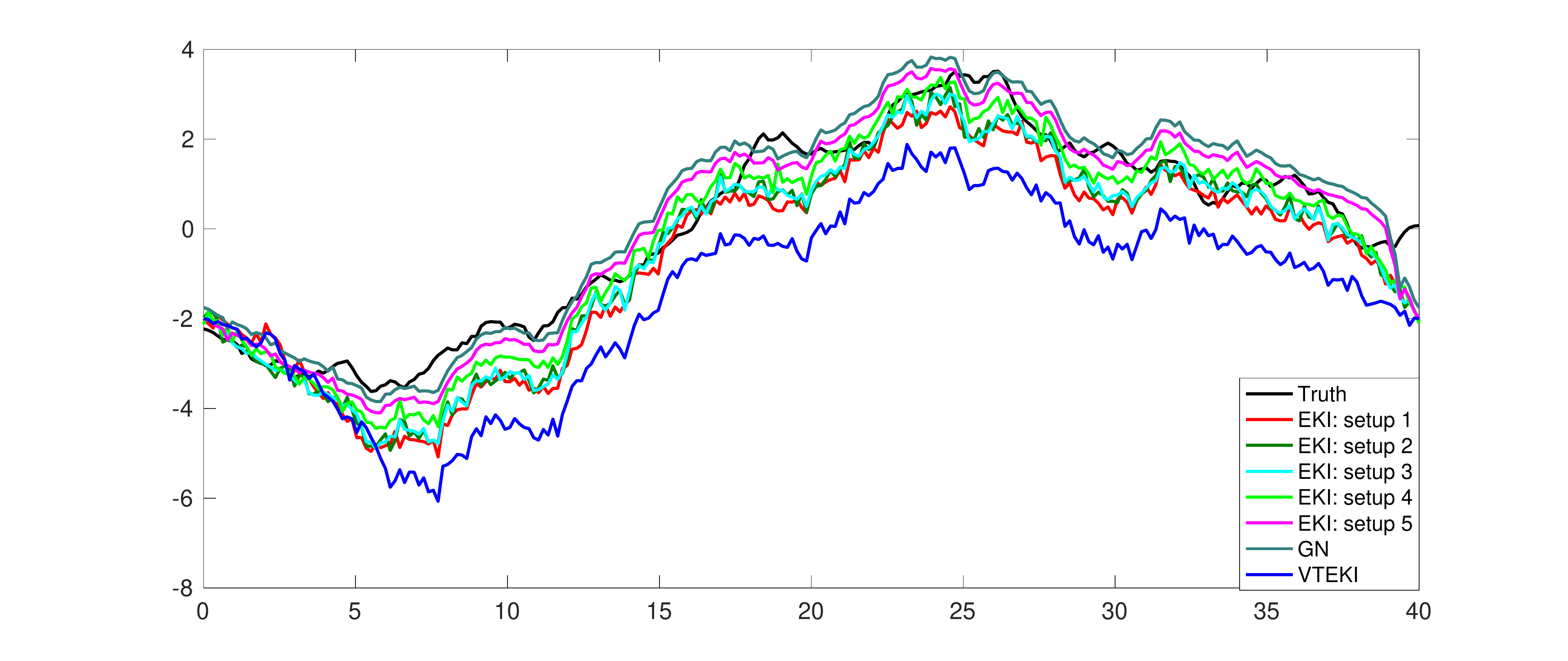}
\caption{Reconstruction of the true initial condition of the L96 model.}
 \label{fig:Lorenz_1}
\end{figure}

\begin{table}[h!]
 \begin{center}
    \begin{tabular}{|c|r|r|r|r}
          \hline
     \textbf{Setup} & Relative error & Loss function &  Time(min)  \\ 
      \hline
      1 &\quad  0.126 &  1.80 &  \textcolor{black}{74}  \\  
    2 & 0.076  & 1.166 &  \textcolor{black}{74} \\  
     3 & 0.073  & 1.61 &  \textcolor{black}{76} \\  
      4 & 0.065  & 1.57 &  \textcolor{black}{77} \\  
      5 & 0.057  & 1.56 &  \textcolor{black}{79} \\  
      6 & 0.065 & 1.59 &  \textcolor{black}{77}  \\  
      7 & 0.068& 1.62 &  \textcolor{black}{78}   \\  
      8 & 0.073 & 1.67 &  \textcolor{black}{76}   \\  
     9 & 0.077 & 1.71 &  \textcolor{black}{74}  \\  
      10 &0.081 &1.73 &  \textcolor{black}{73}  \\  
       VTEKI & 0.183  & 1.84 &  \textcolor{black}{73}  \\  
       GN & 0.042  &  1.48 & \textcolor{black}{80} \\  
               \hline
    \end{tabular}
    \bigskip
        \caption{{Effect of changing $\beta$ and $\gamma$ for the L96 model and the computational cost}.}
              \label{table:cases1}
  \end{center}
\end{table}

\begin{figure}[h!]
\centering
\includegraphics[width=\linewidth]{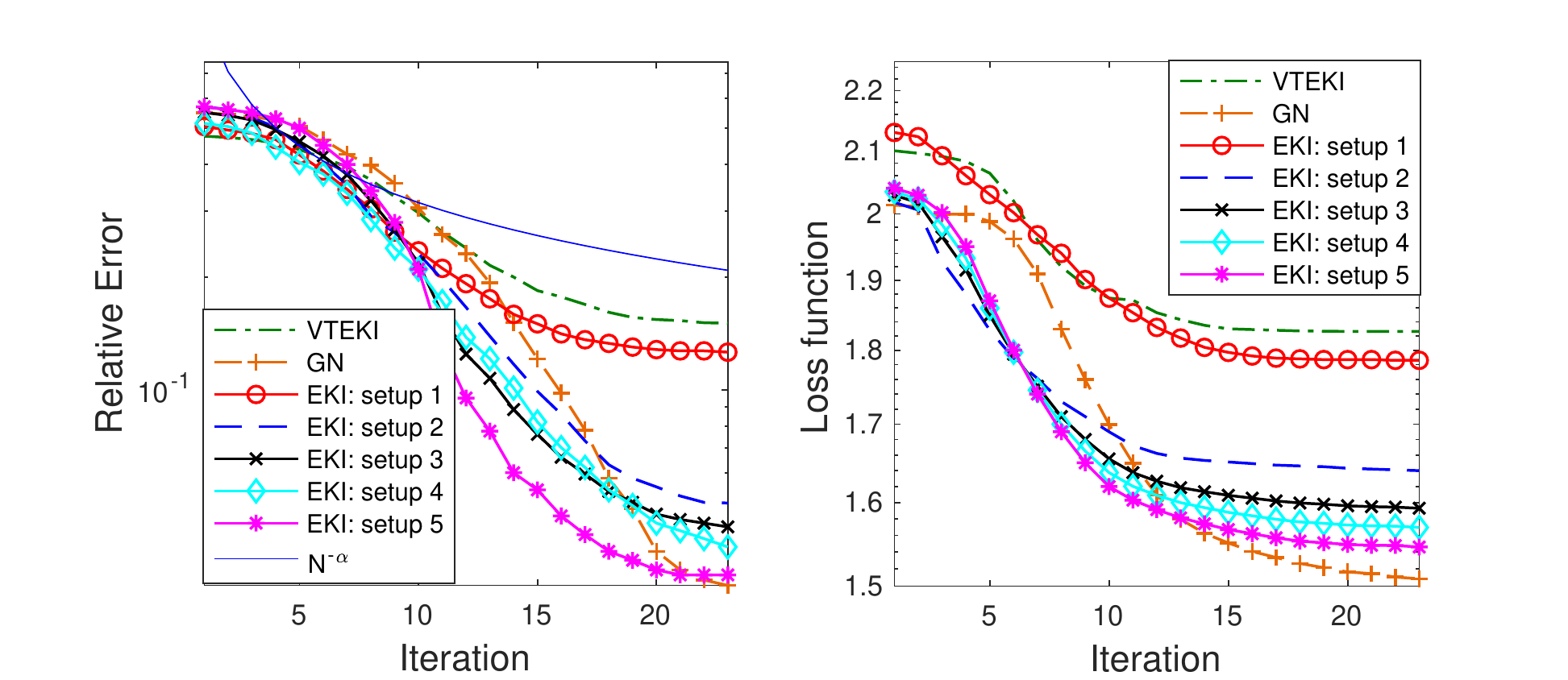}
\caption{Relative error and error in loss function of the L96 model.}
 \label{fig:error_L96}
\end{figure}


\begin{figure}[h!]
\centering
\includegraphics[scale=0.35]{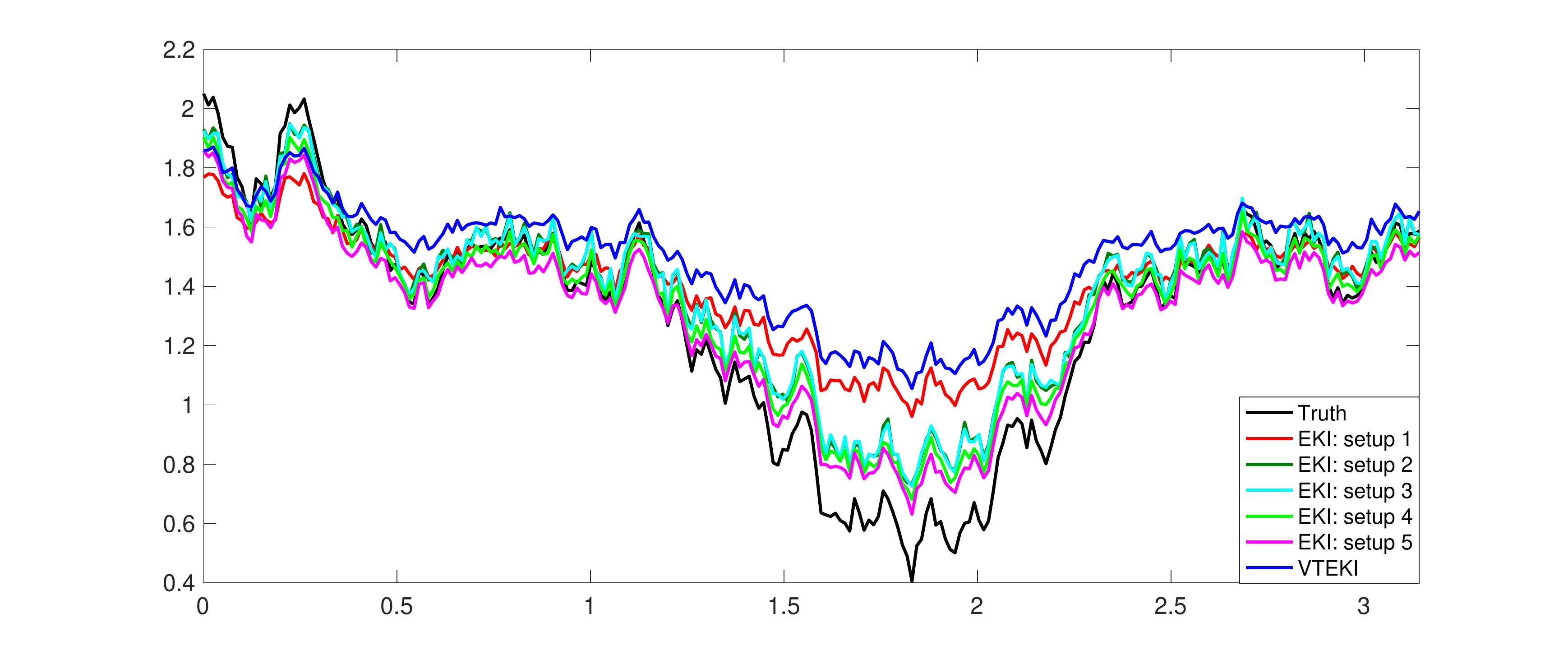}
\caption{Reconstruction of the true initial condition of 1D Darcy flow.}
 \label{fig:PDE}
\end{figure}

\begin{table}[h!]
  \begin{center}
    \begin{tabular}{|c|r|r|r|r}
          \hline
     \textbf{Setup} & Relative error & Loss function &  Time (min)\\ 
     \hline
      1 &\quad  0.112 &  1.63 &  \textcolor{black}{92}\\  
     2 & 0.058  &  1.50 &  \textcolor{black}{94}\\  
       3 & 0.053  & 1.45 &  \textcolor{black}{97} \\  
        4 & 0.047  &  1.42 &  \textcolor{black}{95} \\  
         5 & 0.044  &  1.40 &  \textcolor{black}{98} \\  
        6 & 0.044  &  1.37 &  \textcolor{black}{97} \\ 
        7 & 0.046 & 1.39 &  \textcolor{black}{94} \\
        8 & 0.049  & 1.42 &  \textcolor{black}{96} \\
        9 & 0.057  & 1.48 &  \textcolor{black}{93} \\
        10 & 0.060  &  1.53 & \textcolor{black}{95}  \\
       VTEKI & 0.161  &  1.87 &  \textcolor{black}{91}\\  
       GN & 0.038  & 1.33  & \textcolor{black}{102}\\  
        \hline
  \end{tabular}
    \bigskip
        \caption{{Effect of changing $\beta$ and $\gamma$ for 1D Darcy flow and the computational cost}.}
              \label{table:cases2}
  \end{center}
\end{table}

\begin{figure}[h!]
\centering
\includegraphics[width=\linewidth]{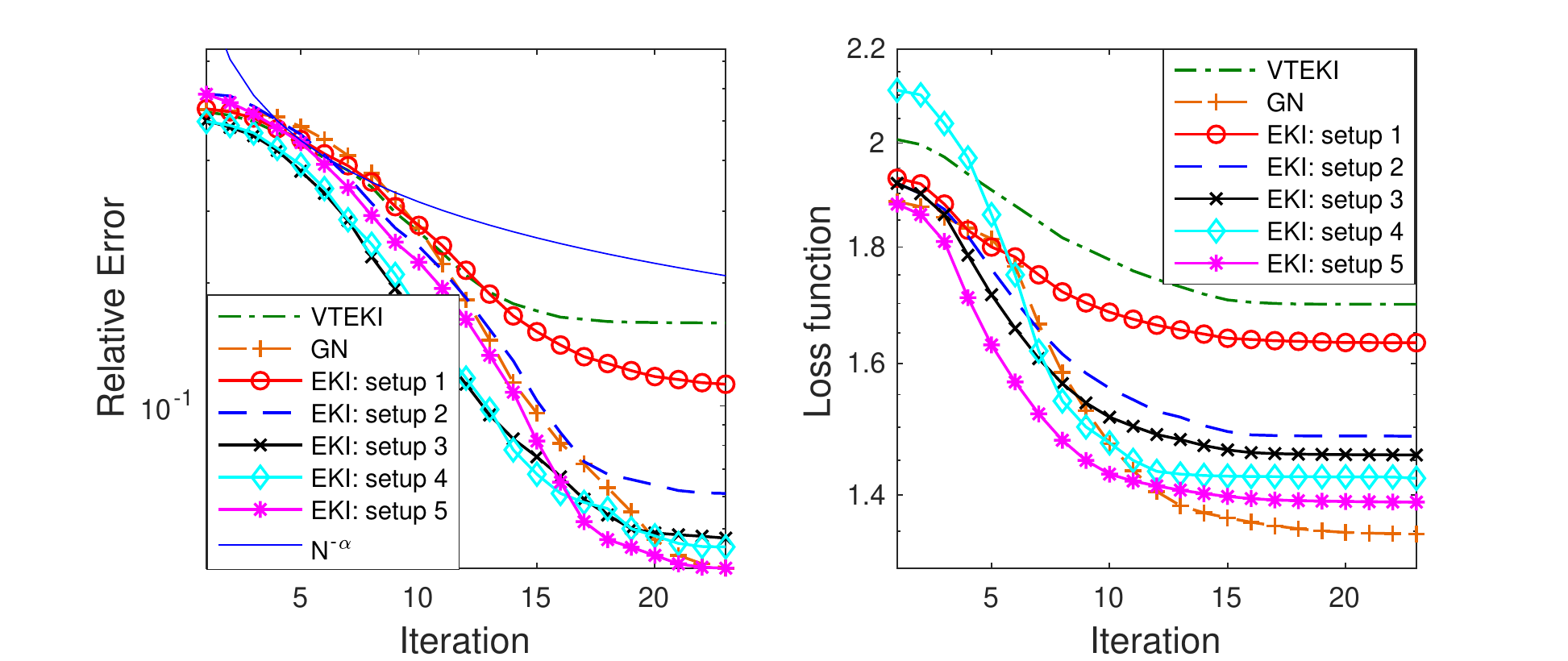}
\caption{Relative error and error in loss function of 1D Darcy flow.}
 \label{fig:error_pde}
\end{figure}

\begin{figure}[h!]
\centering
\includegraphics[scale=0.3]{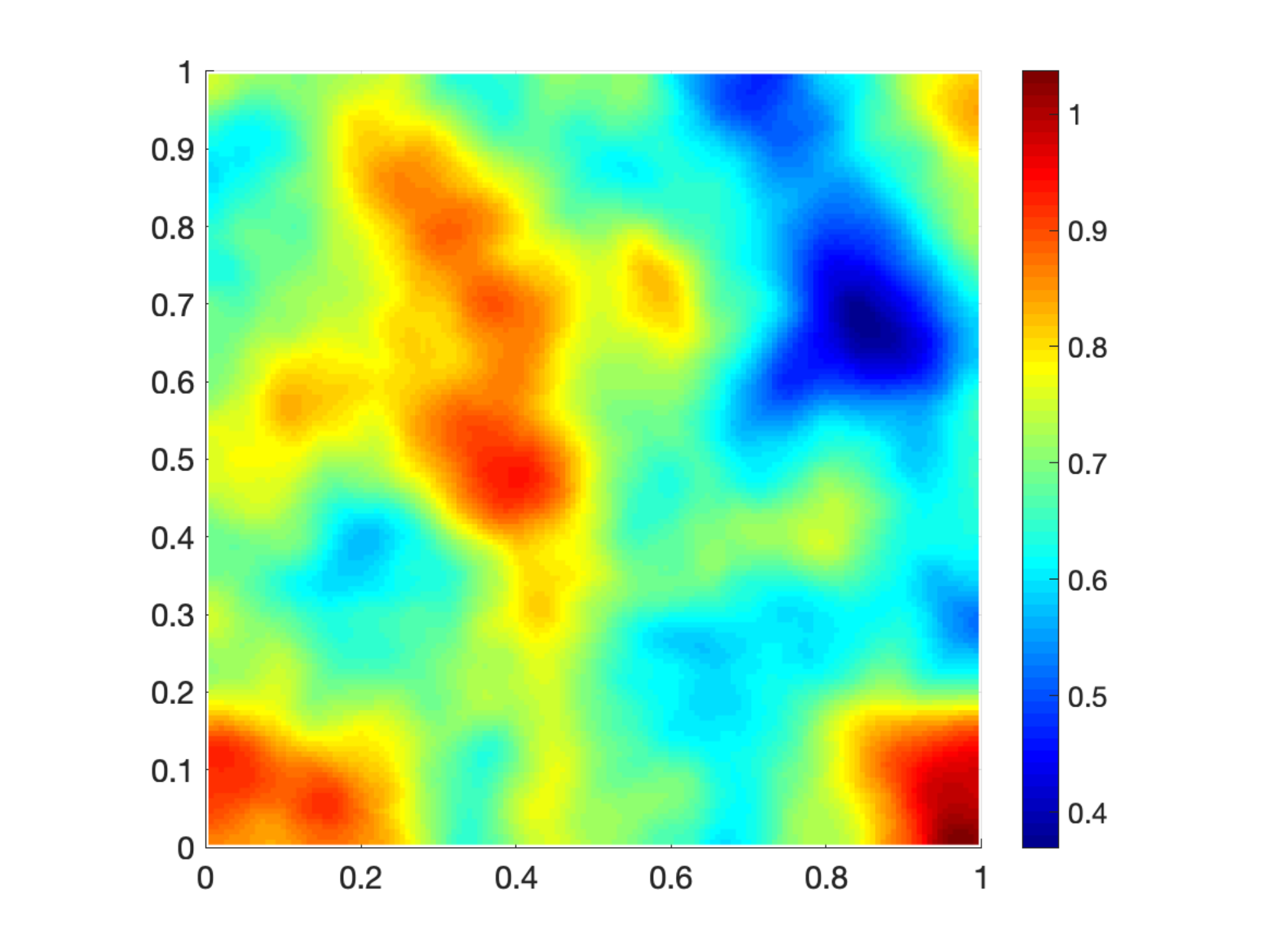}
\caption{Gaussian random field truth.}
 \label{fig:truth2D}
\end{figure}


\begin{figure}[h!]
\centering
\includegraphics[width=\linewidth,trim=1cm 0cm 1cm 0cm,clip]{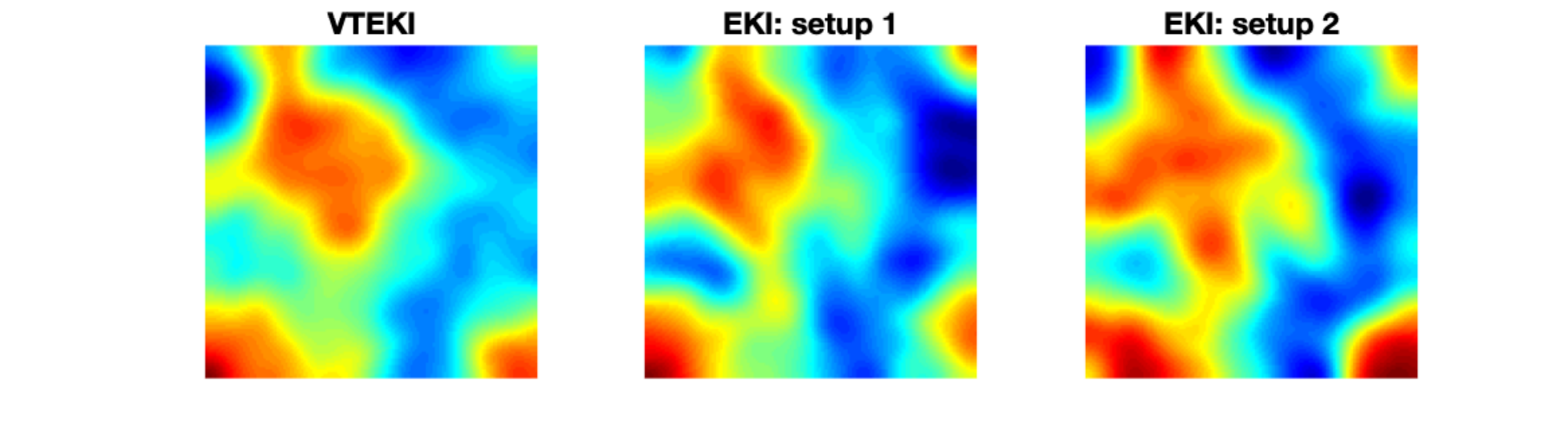}
\includegraphics[width=\linewidth,trim=1cm 0cm 1cm 0cm,clip]{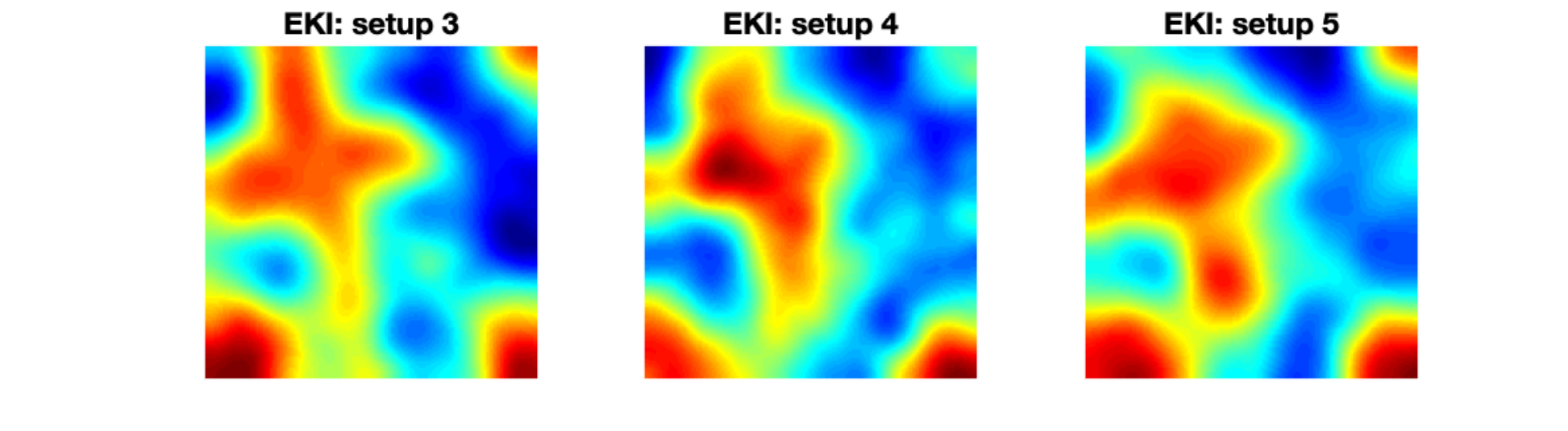}
\includegraphics[width=\linewidth,trim=1cm 0cm 1cm 0cm,clip]{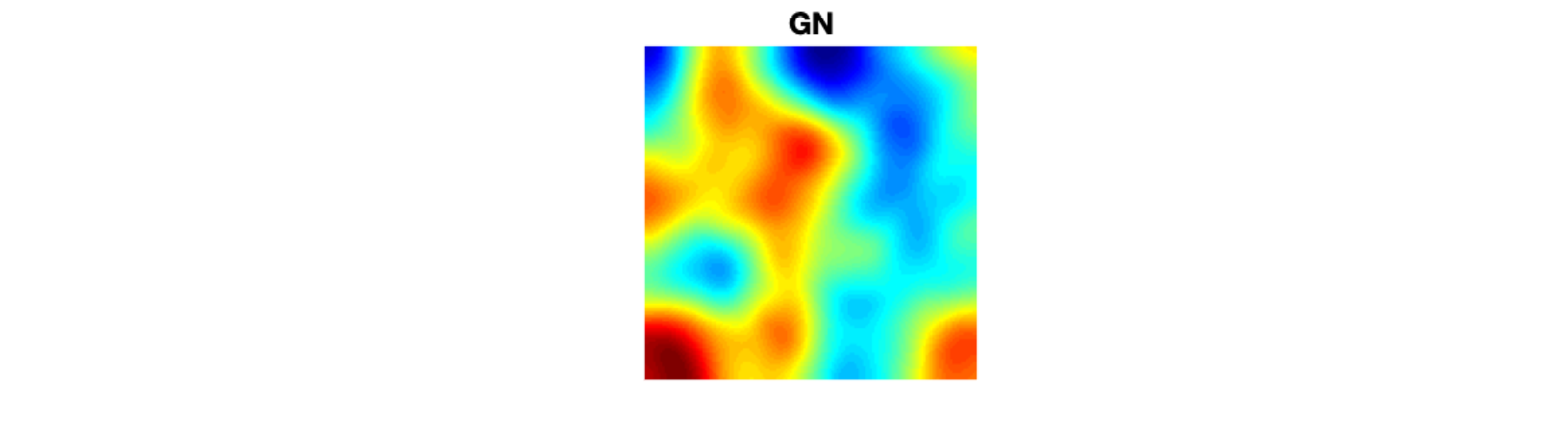}
\caption{Reconstruction of the truth for 2D Darcy flow. Top row. Left: Vanilla TEKI. Centre: EKI setup 1. Right : EKI setup 2. Bottom row. Left:  EKI setup 3. Centre: EKI setup 4. Right: EKI setup 5.}
 \label{fig:reconstruction_2D}
\end{figure}

\begin{table}[h!]
  \begin{center}
    \begin{tabular}{|c|r|r|r|r}
          \hline
     \textbf{Setup} & Relative error & Loss function &  Time (min)\\ 
     \hline
1 &\quad  0.172 &  2.01 & \textcolor{black}{281} \\  
 2 & 0.164  &  1.89  & \textcolor{black}{278} \\  
   3 & 0.158  & 1.84  & \textcolor{black}{274} \\  
 4 & 0.154  &  1.81  & \textcolor{black}{276}\\  
         5 & 0.152  &  1.76  & \textcolor{black}{273} \\  
 6 & 0.154  & 1.64 & \textcolor{black}{275} \\
 7 & 0.156  & 1.67 & \textcolor{black}{278} \\
 8 & 0.161  & 1.73 & \textcolor{black}{275} \\
 9 & 0.164 &  1.79 & \textcolor{black}{279}\\
 10 & 0.167   & 1.89 & \textcolor{black}{282} \\
  VTEKI & 0.208  &  2.06  &\textcolor{black}{279}  \\  
GN & 0.137  & 1.66  & \textcolor{black}{307}\\  
 
                  \hline
    \end{tabular}
    \bigskip
        \caption{{Effect of changing $\beta$ and $\gamma$ for 2D Darcy flow and the computational cost}.}
              \label{table:cases3}
  \end{center}
\end{table}


\begin{figure}[h!]
\centering
\includegraphics[width=\linewidth]{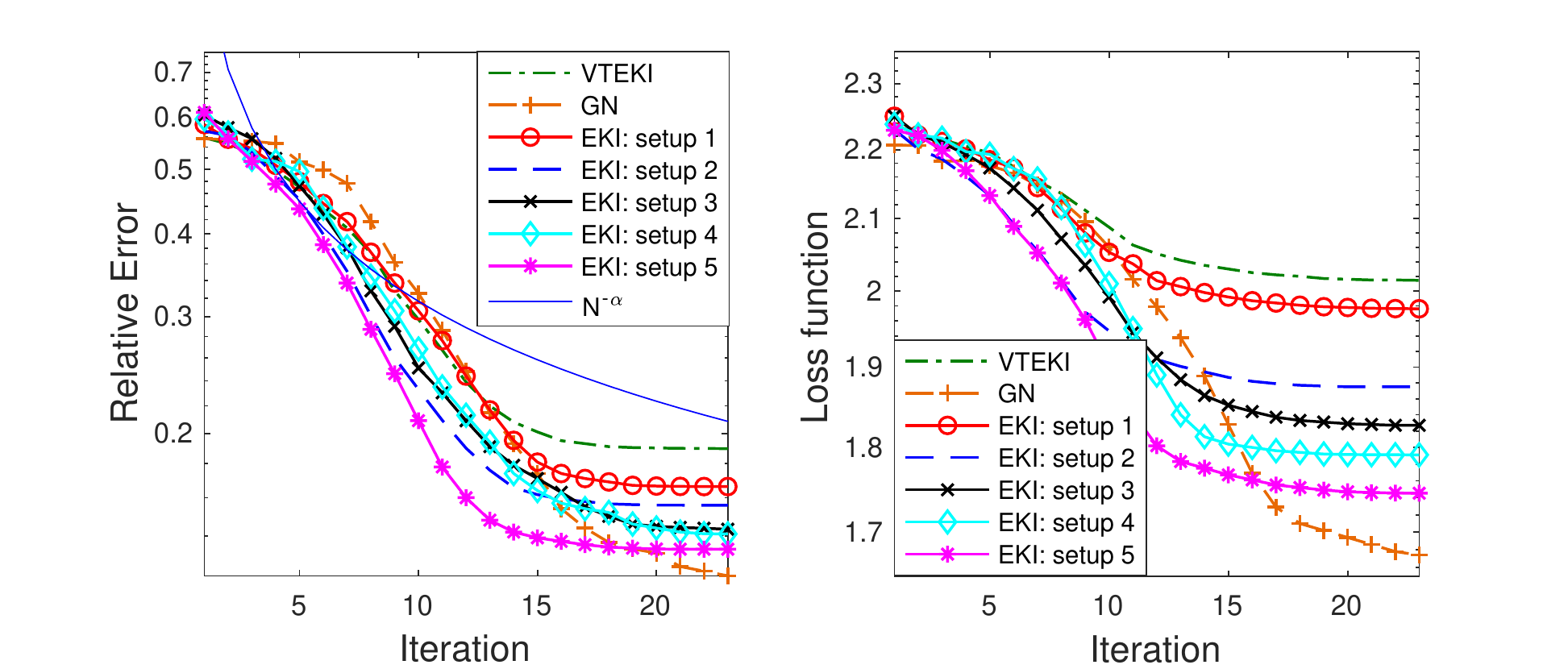}
\caption{Relative error and error in loss function of 2D Darcy flow with EKI.}
 \label{fig:errors_2D}
\end{figure}

\section{Conclusion}
\label{sec:conc}

Ensemble Kaman inversion (EKI) is a derivate-free method used to solve inverse problems. As it formulated in a variational manner, a natural question to ask is how to accelerate its convergence. We aimed to answer this by considering a discrete time formulation of EKI and providing a convergence analysis in  general nonlinear settings. We show that approximate stationary points are attainable in a non-convex setting while global minimizers are attainable in a strongly convex setting. A key insight in our work was the use of filtering techniques such as covariance inflation and adopting an ensemble square-root formulation, as well as ideas of non-constant step-sizes from machine learning. These results were highlighted through different numerical examples which validate the performance improvement of EKI.  By using various inverse problems, we were able to see the consistency among experiments. {Also from the experiments we saw that EKI performed similarly to the GN method, whilst taking less time to run.}

This work promotes various directions to take both in terms of algorithmic efficiency and also further theory for EKI. For the algorithmic efficiency one could consider understanding the relationship between EKI and common optimization techniques which include  Nesterov acceleration \cite{YN04}, momentum and stochastic gradient descent \cite{CLTZ16,VP17}, where initial work has been done in this in the context of machine learning \cite{HLR18,KS18}. {As our results hold for a nonlinear operator in discrete time, a natural direction is to to investigate similar properties in the noisy continuous-time setting, similar to that of \cite{BSWW19}}. {An other direction to consider is the adaptive learning of the regularization parameter ${\lambda}$, which would optimize the effect of the regularization}. {Finally as EKI is viewed as an optimizer but can use techniques from Bayesian methodologies, it would be of interest to compare EKI with commonly used Bayesian methods for optimization. These include simulated annealing as discussed, but also hybrid methods such as parallel tempering and variants of sequential Monte Carlo. These and other areas will considered for future work.}

\section*{Acknowledgments}

NKC acknowledges a Singapore Ministry of Education Academic Research Funds Tier 2 grant [MOE2016-T2-2-135]. The research of XTT is supported by the Singapore Ministry of Education Academic Research Funds Tier 1 grant R-146-000-292-114.

\appendix
\section{Convergence of the ensemble}
\label{sec:proof1}
The proof of Theorem \ref{thm:precision} is decomposed into four lemmas below. Note that we can always normalize the problem by consider the transformation $\tilde{u}=\Sigma^{-1/2}u$,  so the prior covariance of $\tilde{u}$ is $\mathbf{I}_{d_u}$. Likewise,  the observation model \eqref{eqn:IP} can be transformed to 
\[
\tilde{y}=\Gamma^{-1/2}\calG(u)+\Gamma^{-1} \eta=\widetilde{\calG}(\tilde{u})+\tilde{\eta}.
\]
Therefore without loss of generality, we assume $\Sigma=\mathbf{I}_{d_u}, \Gamma=\bfI_{d_y}$ in most of the analysis below for simplicity. 
\begin{lem}
\label{lem:precisionbound}
Let $\omega_n$ be the minimum eigenvalue of $\Sigma^{1/2}(C^{uu}_{n})^{-1}\Sigma^{1/2}$,  then the sequence satisfies: 
\[
\omega_{n+1}\geq \frac{\omega_n+h_n}{1+\alpha_n^2 (\omega_n+h_n)}. 
\]
\end{lem}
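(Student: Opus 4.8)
The plan is to follow the largest eigenvalue of $C^{uu}_n$ (equivalently the smallest eigenvalue of its inverse) through one application of the covariance update \eqref{eqn:EKIcov}. Following the normalization fixed at the start of this appendix I assume $\Sigma=\bfI_{d_u}$ and $\Gamma=\bfI_{d_y}$, so $\Gamma_+=\bfI_{d_u+d_y}$ and $\omega_n=\lambda_{\min}\big((C^{uu}_n)^{-1}\big)$. First I would note that the ``analysis'' part of \eqref{eqn:EKIcov}, namely $C^{uu}_n-C^{up}_n(C^{pp}_n+h_n^{-1}\Gamma_+)^{-1}C^{pu}_n$, is the Schur complement of the bottom-right block of the positive semidefinite matrix $\begin{bmatrix}C^{uu}_n & C^{up}_n\\ C^{pu}_n & C^{pp}_n+h_n^{-1}\Gamma_+\end{bmatrix}$ (the empirical joint covariance of $u^{(i)}_n$ and $\calH(u^{(i)}_n)$ plus $\mathrm{diag}(0,h_n^{-1}\Gamma_+)$, whose bottom-right block is positive definite), hence is itself positive semidefinite; adding the inflation term gives $C^{uu}_{n+1}\succeq\alpha_n^2\Sigma\succ0$, so that $\omega_n$ is well defined for every $n\ge1$.

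The crucial step is the upper bound
\[
C^{uu}_n-C^{up}_n(C^{pp}_n+h_n^{-1}\Gamma_+)^{-1}C^{pu}_n\ \preceq\ \big((C^{uu}_n)^{-1}+h_n\bfI\big)^{-1}.
\]
To prove it I would exploit the special structure $\calH(u)=[\calG(u);u]$: the bottom $d_u$ rows of $\calH(u^{(i)}_n)-\Hbar_n$ are exactly $u^{(i)}_n-m_n$, so if $L=[\,0\ \ \bfI_{d_u}\,]$ projects onto the last $d_u$ coordinates then $L\,C^{pu}_n=C^{uu}_n$ and $L(C^{pp}_n+h_n^{-1}\Gamma_+)L^T=C^{uu}_n+h_n^{-1}\bfI$. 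Writing $S=C^{pp}_n+h_n^{-1}\Gamma_+\succ0$, the elementary projection inequality $S^{-1}\succeq L^T(LSL^T)^{-1}L$ (valid because $S^{1/2}L^T(LSL^T)^{-1}LS^{1/2}$ is an orthogonal projection, hence $\preceq\bfI$) yields $C^{up}_n S^{-1}C^{pu}_n\succeq C^{uu}_n\big(C^{uu}_n+h_n^{-1}\bfI\big)^{-1}C^{uu}_n$, and subtracting this from $C^{uu}_n$ and applying the matrix inversion lemma gives the displayed bound. I expect this to be the main obstacle: bounding the analysis covariance using only positive semidefiniteness of the joint ensemble covariance produces an inequality in the wrong direction, so one must genuinely use the identity block of $\calH$ (intuitively, the artificial observation $u=\zeta$ already pins down $u$ to precision $h_n\bfI$).

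Finally I would add back the inflation term and pass to eigenvalues. From the key bound and \eqref{eqn:EKIcov},
\[
C^{uu}_{n+1}\ \preceq\ \big((C^{uu}_n)^{-1}+h_n\bfI\big)^{-1}+\alpha_n^2\bfI,
\]
so $\lambda_{\max}(C^{uu}_{n+1})\le \dfrac{1}{\lambda_{\min}((C^{uu}_n)^{-1})+h_n}+\alpha_n^2=\dfrac{1}{\omega_n+h_n}+\alpha_n^2$. Taking reciprocals, $\omega_{n+1}=1/\lambda_{\max}(C^{uu}_{n+1})\ge (\omega_n+h_n)/\big(1+\alpha_n^2(\omega_n+h_n)\big)$, which is the asserted recursion. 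Undoing the normalization, i.e. conjugating the whole argument by $\Sigma^{-1/2}$ (which sends $(C^{uu}_n)^{-1}$ to $\Sigma^{1/2}(C^{uu}_n)^{-1}\Sigma^{1/2}$ and $\alpha_n^2\Sigma$ to $\alpha_n^2\bfI$), recovers the statement for general $\Sigma$.
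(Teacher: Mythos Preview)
Your proof is correct and follows essentially the same strategy as the paper: both exploit the identity block of $\calH$ to establish $C^{up}_n(C^{pp}_n+h_n^{-1}\Gamma_+)^{-1}C^{pu}_n\succeq C^{uu}_n(C^{uu}_n+h_n^{-1}\bfI)^{-1}C^{uu}_n$, and then pass to the largest eigenvalue of $C^{uu}_{n+1}$. Your direct projection inequality $S^{-1}\succeq L^T(LSL^T)^{-1}L$ (via the idempotent $S^{1/2}L^T(LSL^T)^{-1}LS^{1/2}\preceq\bfI$) is a cleaner derivation of the same bound than the paper's block-inversion with a parameter $M\to\infty$, but the underlying idea and resulting estimate are identical.
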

\begin{proof}
Recall the sample covariance update rule is given by 
\[
C^{uu}_{n+1}=C^{uu}_n- C^{up}_n (h^{-1}_n \Gamma_++ C^{pp}_n )^{-1}C_n^{pu}+\alpha^2_n \bfI. 
\]
 To continue, we have a closer look at the cross covariance matrices and denote 
\[
A=\frac1K\sum_{i=1}^K \left(\calG(u^{(i)}_n)-\frac1K\sum_{i=1}^K\calG(u^{(i)}_n)\right) \otimes \left(\calG(u^{(i)}_n)-\frac1K\sum_{i=1}^K\calG(u^{(i)}_n)\right), 
\]
\[
B=\frac1K\sum_{i=1}^K (u^{(i)}_n-m_n)\otimes \left(\calG(u^{(i)}_n)-\frac1K\sum_{i=1}^K\calG(u^{(i)}_n)\right), 
\]
\[
C=\frac1K\sum_{i=1}^K (u^{(i)}_n-m_n)\otimes (u^{(i)}_n-m_n)=C^{uu}_n. 
\]
Then 
\[
C^{up}_n=\begin{bmatrix}
B &C
\end{bmatrix},\quad C^{pp}_n=\begin{bmatrix}
A & B^T\\
B &C
\end{bmatrix}.
\]
For a sufficiently large number $M$, the following holds
\[
C^{pp}_n+h_n^{-1}\Gamma_+=\begin{bmatrix}
A+h_n^{-1} \Gamma & B^T\\
B &C+h_n^{-1}\bfI
\end{bmatrix}\preceq 
\begin{bmatrix}
M \bfI  & B^T\\
B &C+h_n^{-1}\bfI
\end{bmatrix}.
\]
As a consequence, we can apply the block matrix inversion formula \cite{DB05}
\begin{align*}
[C^{pp}_n+h_n^{-1}\Gamma_+]^{-1}&\succeq \begin{bmatrix}
M \bfI  & B^T\\
B &C+h_n^{-1}\bfI
\end{bmatrix}^{-1}\\
&=\begin{bmatrix}
\frac1M \bfI +\frac1{M^2} B^T[C+h_n^{-1}\bfI-\frac1MBB^T ]^{-1}B, & -\frac1M B^T[C+h_n^{-1}\bfI-\frac1MBB^T ]^{-1}\\
-\frac1M [C+h_n^{-1}\bfI-\frac1MBB^T ]^{-1} B, &[C+h_n^{-1}\bfI-\frac1MBB^T ]^{-1}
\end{bmatrix}.
\end{align*}
Since this holds for any sufficiently large $M$, we can let $M\to \infty$, and find that
\[
[C^{pp}_n+h_n^{-1}\Gamma_+]^{-1}\succeq \begin{bmatrix}
\mathbf{0}, & \mathbf{0}\\
\mathbf{0}, &[C+h^{-1}_n\bfI]^{-1}
\end{bmatrix}.
\]
Therefore
\[
C^{up}_n [C^{pp}_n +h_n^{-1}\Gamma_+ ]^{-1}C_n^{pu}\succeq C[C+h_n^{-1}\bfI]^{-1} C=C_n^{uu}[C_n^{uu}+h_n^{-1}\bfI]^{-1} C_n^{uu}.
\]
So we have
\[
C^{uu}_{n+1}\preceq C_n^{uu}-C_n^{uu}[C_n^{uu}+h_n^{-1}\bfI]^{-1} C_n^{uu}+\alpha_n^2\bfI. 
\]
Clearly $C_n^{uu}$ shares the same eigenvectors with $C_n^{uu}-C_n^{uu}[C_n^{uu}+h_n^{-1}\bfI]^{-1} C_n^{uu}+\alpha_n^2\bfI$, while an eigenvalue $c$ for the former corresponds to an eigenvalue $c-c(c+h_n^{-1})c+\alpha_n^2$ for the latter. From this, we find that if  
 $\omega_n$ is the minimum eigenvalue of $[C_n^{uu}]^{-1}$, then the minimum eigenvalue of $[C_{n+1}^{uu}]^{-1}$ will be bounded from below by
\[
\omega_{n+1}\geq \frac{\omega_n+h_n }{1+\alpha_n^2 (\omega_n+h_n)}.
\]
\end{proof}
\begin{lem}
\label{lem:conditionbound}
With $\omega_n$ defined in Lemma \ref{lem:precisionbound}, let $c_n$ be the minimal eigenvalue of $\Sigma^{-1/2}C^{uu}_n\Sigma^{-1/2}$, it satisfies the following recursion:
\[
c_{n+1}\geq \frac{c_n}{\tfrac{M_1^2d_u h_n}{\omega_n}+1}+\alpha_n^2. 
\]
\end{lem}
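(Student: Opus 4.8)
The plan is to keep the normalization of this appendix ($\Sigma=\bfI_{d_u}$, $\Gamma=\bfI_{d_y}$, hence $\Gamma_+=\bfI_{d_u+d_y}$), so that $c_n$ is the smallest eigenvalue of $C^{uu}_n$ and, since Lemma \ref{lem:precisionbound} reads $\omega_n=\lambda_{\min}((C^{uu}_n)^{-1})$, we have $\omega_n=1/\norm{C^{uu}_n}$. Unlike in Lemma \ref{lem:precisionbound}, here I would rewrite the covariance recursion \eqref{eqn:EKIcov} as an \emph{exact} identity rather than an inequality. Introduce the scaled centered ensemble matrices $Y\in\reals^{d_u\times K}$ whose $i$-th column is $\tfrac{1}{\sqrt K}(u^{(i)}_n-m_n)$ and $X\in\reals^{(d_u+d_y)\times K}$ whose $i$-th column is $\tfrac{1}{\sqrt K}(\calH(u^{(i)}_n)-\Hbar_n)$, so that $C^{uu}_n=YY^T$, $C^{pp}_n=XX^T$, $C^{up}_n=YX^T$. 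Setting $G:=X^TX\in\reals^{K\times K}$ and using the push-through identity $X^T(XX^T+h_n^{-1}\bfI)^{-1}X=\bfI_K-h_n^{-1}(G+h_n^{-1}\bfI_K)^{-1}$, the update collapses to
\[
C^{uu}_{n+1}=C^{uu}_n-C^{up}_n(C^{pp}_n+h_n^{-1}\Gamma_+)^{-1}C^{pu}_n+\alpha_n^2\bfI=h_n^{-1}Y(G+h_n^{-1}\bfI_K)^{-1}Y^T+\alpha_n^2\bfI .
\]

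Next I would read off the minimal eigenvalue. Fix any unit vector $v$ and set $w:=Y^Tv$, so that $\norm{w}^2=v^TC^{uu}_nv\geq c_n$. Since $G+h_n^{-1}\bfI_K\preceq(\norm{G}+h_n^{-1})\bfI_K$, the identity above gives
\[
v^TC^{uu}_{n+1}v=h_n^{-1}w^T(G+h_n^{-1}\bfI_K)^{-1}w+\alpha_n^2\geq\frac{h_n^{-1}}{\norm{G}+h_n^{-1}}\norm{w}^2+\alpha_n^2\geq\frac{c_n}{h_n\norm{G}+1}+\alpha_n^2 ,
\]
and since this holds for every unit $v$, it holds for the minimizing eigenvector of $C^{uu}_{n+1}$, giving $c_{n+1}\geq \tfrac{c_n}{h_n\norm{G}+1}+\alpha_n^2$.

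It then remains to estimate $\norm{G}$. Since $G=X^TX$ shares its nonzero spectrum with $XX^T=C^{pp}_n$, $\norm{G}=\norm{C^{pp}_n}\leq\tr(C^{pp}_n)$. Because the sample mean $\Hbar_n$ minimizes $w\mapsto\tfrac1K\sum_i\norm{\calH(u^{(i)}_n)-w}^2$, I would compare against $\calH(m_n)$: $\tr(C^{pp}_n)=\tfrac1K\sum_i\norm{\calH(u^{(i)}_n)-\Hbar_n}^2\leq\tfrac1K\sum_i\norm{\calH(u^{(i)}_n)-\calH(m_n)}^2$, and the Lipschitz bound of Assumption \ref{aspt:obsLip} applied to the full map $\calH$ gives $\norm{\calH(u^{(i)}_n)-\calH(m_n)}\leq M_1\norm{u^{(i)}_n-m_n}$; hence $\tr(C^{pp}_n)\leq M_1^2\tr(C^{uu}_n)\leq M_1^2 d_u\norm{C^{uu}_n}=M_1^2 d_u/\omega_n$. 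Substituting into the previous display yields exactly the stated recursion. The only delicate point — the step that must be executed carefully to land the clean constant $M_1^2 d_u$ rather than something larger — is to keep the covariance update as an exact identity via the push-through formula (so no off-diagonal terms are discarded as in Lemma \ref{lem:precisionbound}) and to bound $\tr(C^{pp}_n)$ by recentering at $\calH(m_n)$ before invoking the Lipschitz estimate.
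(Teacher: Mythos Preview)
Your proof is correct and follows essentially the same route as the paper: both rewrite the covariance update via the ensemble spread matrices $Y=S^u_n$, $X=S^p_n$ so that $C^{uu}_{n+1}=S^u_n(\bfI-A)(S^u_n)^T+\alpha_n^2\bfI$ with $A=X^T(XX^T+h_n^{-1}\bfI)^{-1}X$, and both bound $\|C^{pp}_n\|\le M_1^2 d_u/\omega_n$ via the Lipschitz estimate after recentering at $\calH(m_n)$. The only cosmetic difference is that you invoke the push-through identity to write $\bfI-A=h_n^{-1}(G+h_n^{-1}\bfI)^{-1}$ and bound it below by $(h_n\|G\|+1)^{-1}\bfI$ directly, whereas the paper reaches the equivalent bound $A\preceq(1+\omega_n/(M_1^2 d_u h_n))^{-1}\bfI$ through the quadratic inequality $A\succeq(1+\omega_n/(M_1^2 d_u h_n))A^2$; your variant is a clean shortcut to the same inequality.
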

\begin{proof}
Recall the sample covariance matrix is updated by
\[
C^{uu}_{n+1}=C^{uu}_n- C^{up}_n (h^{-1}_n \bfI+ C^{pp}_n )^{-1}C_n^{pu}+\alpha^2_n \bfI. 
\]
Define the ensemble spread matrices:
\[
S^u_n=\frac{1}{\sqrt{K}}\left[u^{(1)}_n-m_n,\cdots, u^{(K)}_n-m_n\right],\quad
S^p_n=\frac{1}{\sqrt{K}}\left[\calH(u^{(1)}_n)-\Hbar_n,\cdots, \calH(u^{(K)}_n)-\Hbar_n\right].
\]
Note that 
\[
C^{uu}_n=S^u_n (S^u_n)^T,\quad C^{up}_n=S^u_n (S^p_n)^T,\quad C^{pp}_n=S^p_n (S^p_n)^T,
\]
moreover,
\[
C^{up}_n (h_n^{-1} \bfI+ C^{pp}_n )^{-1}C_n^{pu}=S^u_nA [S^u_n]^T,\quad A:=[S^p_n]^T (h_n^{-1} \bfI+ C^{pp}_n )^{-1} S^p_n.
\]
Next, note that by Assumption \ref{aspt:obsLip}, 
\begin{align*}
\text{tr}(C_n^{pp}) &=\frac1K\sum_{i=1}^K \|\calH(u^{(i)}_n)-\Hbar_n\|^2\\
&\leq \frac1K\sum_{i=1}^K \|\calH(u^{(i)}_n)-\calH(m_n)\|^2\leq \frac{M_1^2}K\sum_{i=1}^K \|u^{(i)}_n-m_n\|^2=M_1^2\text{tr}(C_n^{uu}).
\end{align*}
Therefore 
\begin{equation}
\label{tmp:Cpp}
\|C^{pp}_n\|\leq \text{tr}(C_n^{pp})\leq M_1^2\text{tr}(C_n^{uu})\leq M_1^2 d_u \|C_n^{uu}\|\leq M_1^2d_u  \omega_n^{-1},
\end{equation}
which leads to the following
\begin{align*}
A^2&= [S^p_n]^T (h_n^{-1} \bfI+ C^{pp}_n )^{-1} S^p_n  [S^p_n]^T (h_n^{-1} \bfI+ C^{pp}_n )^{-1} S^p_n\\
&=[S^p_n]^T (h_n^{-1} \bfI+ C^{pp}_n )^{-1} C^{pp}_n (h_n^{-1} \bfI+ C^{pp}_n )^{-1} S^p_n\\
&\preceq  \|C^{pp}_n \| [S^p_n]^T (h_n^{-1} \bfI+ C^{pp}_n )^{-1}\bfI (h_n^{-1} \bfI+ C^{pp}_n )^{-1} S^p_n\\
&\preceq  \frac{M_1^2 d_u h_n}{ \omega_n} [S^p_n]^T (h^{-1}_n \bfI+ C^{pp}_n )^{-1} h^{-1}_n\bfI (h^{-1}_n \bfI+ C^{pp}_n )^{-1}S^p_n.
\end{align*}
As a consequence,
\begin{align*}
A&=[S^p_n]^T (h_n^{-1} \bfI+ C^{pp}_n )^{-1} (S^p_n  [S^p_n]^T+h^{-1}_n \bfI )(h_n^{-1} \bfI+ C^{pp}_n )^{-1} S^p_n\\
&=A^2+[S^p_n]^T (h^{-1}_n \bfI+ C^{pp}_n )^{-1} h^{-1}_n\bfI (h^{-1}_n \bfI+ C^{pp}_n )^{-1}S^p_n\succeq \left(1+\frac{ \omega_n}{M_1^2d_u h_n}\right) A^2. 
\end{align*}
This leads us to $A\preceq \frac{\bfI}{1+\omega_n/(h_nM_1^2d_u)}.$
Therefore, by $C^{uu}_n\succeq c_n \bfI$,  
\begin{align*}
C^{uu}_{n+1}&=C^{uu}_n- C^{up}_n (h^{-1}_n \bfI+ C^{pp}_n )^{-1}C_n^{pu}+\alpha^2_n \bfI\\
&=C^{uu}_n- S^u_n A (S^u_n)^T+\alpha^2_n \bfI\\
&\succeq \frac{\omega_n}{M_1^2d_u h_n+\omega_n}C^{uu}_n+\alpha_n^2\bfI\succeq \left(\frac{c_n}{\tfrac{M_1^2d_uh_n}{\omega_n}+1}+\alpha_n^2\right)\bfI. 
\end{align*}
\end{proof}

\begin{lem}
\label{lem:precisionsequence}
With $\omega_n$ defined as in Lemma \ref{lem:precisionbound},  if we let $h_n=h_0n^{\beta}$  and $\alpha_n^2= \alpha_0^2h_0^{-1} n^{2\gamma-\beta-2}$, where $\beta\leq \gamma\leq 1+\beta $,  then there is a $\kappa>0$, such that 
\[
\omega_n\geq  h_0 \kappa n^{1+\beta-\gamma},\quad \forall n\geq 1.
\]
\end{lem}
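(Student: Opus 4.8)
The plan is to prove the bound by induction on $n$, using the recursion established in Lemma \ref{lem:precisionbound}. Write $p := 1+\beta-\gamma$, and note that $0\le p\le 1$ because $\beta\le\gamma\le 1+\beta$. The structural fact that makes the induction work is that, for each fixed $n$, the map
\[
\phi_n(x):=\frac{x+h_n}{1+\alpha_n^2(x+h_n)}
\]
is increasing on $[0,\infty)$ (its derivative equals $(1+\alpha_n^2(x+h_n))^{-2}>0$). Hence, if the inductive hypothesis $\omega_n\ge h_0\kappa n^{p}$ holds, then $\omega_{n+1}\ge\phi_n(\omega_n)\ge\phi_n(h_0\kappa n^{p})$, and it suffices to verify
\[
\phi_n\big(h_0\kappa n^{p}\big)\ \ge\ h_0\kappa (n+1)^{p}\qquad\text{for every }n\ge 1,
\]
for a suitable constant $\kappa>0$ to be fixed below.

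The next step is to substitute the explicit forms $h_n=h_0 n^{\beta}$ and $\alpha_n^2=\alpha_0^2 h_0^{-1}n^{2\gamma-\beta-2}$ into $\phi_n(h_0\kappa n^{p})$ and simplify. Using the identities $p-\beta=1-\gamma$ and $(2\gamma-\beta-2)+\beta=2\gamma-2$, the factor $h_0$ and all but one power of $n$ cancel, and (writing $t_n:=n^{\gamma-1}$, so $n^{2\gamma-2}=t_n^{2}$) the required inequality collapses to the essentially one-parameter form
\[
\frac{\kappa+t_n}{1+\alpha_0^2\,t_n(\kappa+t_n)}\ \ge\ \kappa\Big(1+\tfrac1n\Big)^{p}.
\]
To close this, I would use $(1+\tfrac1n)^{p}\le 1+\tfrac pn$ (concavity of $x\mapsto(1+x)^{p}$ for $p\in[0,1]$), together with $0\le\gamma\le 1$, so that $t_n\le 1$ and $n^{\gamma}\ge 1$; then $\kappa+t_n\le\kappa+1$ and $1+\tfrac pn\le 2$, and after clearing denominators the inequality reduces to an elementary polynomial estimate in $\kappa$ that holds for all $n\ge 1$ provided $\kappa$ is small enough depending only on $\alpha_0$ and $p$ — for instance small enough that $2\alpha_0^2\kappa(\kappa+1)\le\tfrac12$ and $2\kappa p\le 1$.

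For the base case $n=1$ one needs $\omega_1\ge h_0\kappa$. Here I would invoke that $C^{uu}_1$, being the sample covariance of an initial ensemble of size larger than $d_u$ drawn from a nondegenerate Gaussian, is (almost surely) positive definite, so $\omega_1>0$; shrinking $\kappa$ once more to also satisfy $\kappa\le\omega_1/h_0$ then makes the base case hold. With this choice of $\kappa$, the induction closes and yields $\omega_n\ge h_0\kappa\, n^{1+\beta-\gamma}$ for all $n\ge 1$.

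The main obstacle is the second step: carefully checking that the powers of $n$ and $h_0$ genuinely cancel down to the clean inequality in $t_n$, and then pinning down how small $\kappa$ must be so that this inequality survives its worst case (after the reductions, $t_n$ close to $1$ with $n$ small). The monotonicity of $\phi_n$ is what legitimizes substituting the induction hypothesis into the recursion, so it should be recorded before the computation; everything after the reduction is routine algebra.
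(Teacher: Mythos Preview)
Your proposal is correct and follows essentially the same induction argument as the paper: both reduce, after substituting $h_n$ and $\alpha_n^2$, to the scalar inequality $\frac{\kappa+t_n}{1+\alpha_0^2 t_n(\kappa+t_n)}\ge \kappa(1+1/n)^{p}$ and then choose $\kappa$ small enough (the paper uses $(n+1)^{p}\le n^{p}+cn^{p-1}$ via Taylor expansion where you use the sharper concavity bound $(1+1/n)^{p}\le 1+p/n$, and both tacitly invoke $0\le\gamma\le 1$ so that $t_n\le 1$). One small slip in your base case: $C^{uu}_1$ is the covariance \emph{after} one update, not the initial ensemble covariance; the paper instead applies the recursion of Lemma~\ref{lem:precisionbound} at $n=0$ to get $\omega_1\ge h_0/(1+\alpha_0^2 h_0)>0$ directly, which is cleaner and avoids any appeal to the random initialization.
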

\begin{proof}
We will prove this lemma with mathematical induction. Since 
\[
\omega_{1}\geq \frac{\omega_0+h_0}{1+\alpha_0^2 (\omega_0+h_0)}\geq \frac{h_0}{1+\alpha_0^2 h_0}>0,
\]
we can find a $\kappa$ so that $w_1\geq h_1\kappa$. Suppose $\omega_n\geq  h_0\kappa n^{1+\beta-\gamma} $, we want to show the inequality holds for $\omega_{n+1}$ as well. By Lemma \ref{lem:precisionbound}, it suffices to show 
\begin{equation}
\label{tmp:prebound}
\kappa h_0(n+1)^{1+\beta-\gamma}\leq \frac{h_0(\kappa +n^{\gamma-1})n^{1+\beta-\gamma}}{1+\alpha_n^2h_0(\kappa n^{1-\gamma}+1)n^{\beta}}.
\end{equation}
Because $1+\beta-\gamma\geq 0$, by Taylor expansion, there is a constant $c>0$ so that 
\[
(n+1)^{1+\beta-\gamma}\leq n^{1+\beta-\gamma}+(1+\beta-\gamma)(n+1)^{\beta-\gamma}\leq
n^{1+\beta-\gamma}+cn^{\beta-\gamma}.
\]
The constant $c$ exists, as $(1+\frac1n)^{\beta-\gamma}$ is bounded for all $n$.  Replace $(n+1)^{1+\beta-\gamma}$ in \eqref{tmp:prebound} with this upper bound,  we need to show
\begin{equation}
\label{tmp:pre2}
\kappa (1+cn^{-1})\leq \frac{\kappa +n^{\gamma-1}}{1+\alpha_n^2h_0(\kappa n^{1-\gamma}+1)n^{\beta}}.
\end{equation}
Plug in our formulation of $\alpha_n$, \eqref{tmp:pre2} is equivalent to 
\[
\kappa (1+cn^{-1})+ n^{2\gamma-2} \alpha_0^2(\kappa n^{1-\gamma}+1) \kappa (1+cn^{-1})\leq \kappa +n^{\gamma-1}. 
\]
Or equivalently, 
\[
\alpha_0^2 (\kappa +n^{\gamma-1}) \kappa (1+cn^{-1})+\kappa cn^{1-\gamma} +\kappa cn^{-\gamma} \leq  1. 
\]
When $1\geq \gamma\geq0$, the left hand decreases with $n$,  so we just need to check $n=1$, which is 
\[
\alpha_0^2(\kappa+1) \kappa (1+c)\leq 1-2\kappa c. 
\]
Clearly we can find a small $\kappa>0$, such that it holds. This completes our proof. 
\end{proof}

\begin{lem}
\label{lem:covariancesequence}
With the choice of $h_n$ and $\alpha_n^2$ from Lemma \ref{lem:precisionsequence}, and the definition of $c_n$ in Lemma \ref{lem:conditionbound}, there is an $\eta> 0$ such that 
\[
c_n\geq \eta n^{-1-\beta+\gamma},\quad \forall n\geq 1. 
\]
\end{lem}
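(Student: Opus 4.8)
The plan is to prove the bound by induction on $n$, substituting the lower bound $\omega_n \ge h_0 \kappa n^{1+\beta-\gamma}$ from Lemma \ref{lem:precisionsequence} into the recursion for $c_n$ supplied by Lemma \ref{lem:conditionbound}. Set $\theta := 1+\beta-\gamma$, which is nonnegative since $\gamma \le 1+\beta$; the goal is $c_n \ge \eta n^{-\theta}$. Plugging $h_n = h_0 n^\beta$ and the bound on $\omega_n$ into Lemma \ref{lem:conditionbound} gives $\frac{M_1^2 d_u h_n}{\omega_n} \le b n^{\gamma-1}$ with $b := M_1^2 d_u/\kappa$, hence
\[
c_{n+1} \ge \frac{c_n}{1 + b n^{\gamma-1}} + \alpha_0^2 h_0^{-1} n^{2\gamma-\beta-2}.
\]
The key bookkeeping identity is $2\gamma-\beta-2 = (\gamma-1) - \theta$, so the additive inflation term equals $\alpha_0^2 h_0^{-1} n^{\gamma-1} n^{-\theta}$: it carries precisely the target power $n^{-\theta}$, modulated by the same factor $n^{\gamma-1}$ that controls the size of the contraction defect $1 - \frac{1}{1+bn^{\gamma-1}}$. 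This matching is exactly why the exponent $2\gamma-\beta-2$ was chosen in the definition of $\alpha_n^2$.

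For the base case I would use $c_1 \ge \alpha_0^2 > 0$, which follows from Lemma \ref{lem:conditionbound} at $n = 0$ together with $c_0 \ge 0$; thus it suffices to require $\eta \le \alpha_0^2$. For the inductive step, assuming $c_n \ge \eta n^{-\theta}$ and using $\frac{1}{1+x} \ge 1-x$ (valid for all $x \ge 0$),
\[
c_{n+1} \ge \eta n^{-\theta}\big(1 - b n^{\gamma-1}\big) + \alpha_0^2 h_0^{-1} n^{\gamma-1} n^{-\theta} = \eta n^{-\theta} + n^{-\theta} n^{\gamma-1}\big(\alpha_0^2 h_0^{-1} - \eta b\big).
\]
If $\eta$ is small enough that $\alpha_0^2 h_0^{-1} - \eta b \ge 0$, i.e. $\eta \le \alpha_0^2 \kappa/(h_0 M_1^2 d_u)$, the last term is nonnegative and $c_{n+1} \ge \eta n^{-\theta} \ge \eta (n+1)^{-\theta}$ because $\theta \ge 0$. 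Taking $\eta := \min\{\alpha_0^2,\ \alpha_0^2\kappa/(h_0 M_1^2 d_u)\}$ closes both the base case and the inductive step.

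I do not anticipate a real obstacle: once Lemmas \ref{lem:precisionsequence} and \ref{lem:conditionbound} are available, the argument is a one-line substitution followed by a clean induction, and no case split on $\gamma < 1$ versus $\gamma \ge 1$ is needed since the elementary inequality $\frac{1}{1+x}\ge 1-x$ holds on all of $x \ge 0$. The only genuinely essential point — and the heart of the lemma — is recognizing that the inflation schedule $\alpha_n^2 = \alpha_0^2 h_0^{-1} n^{2\gamma-\beta-2}$ is tuned so the injected variance decays at exactly the rate required to compensate the non-negligible multiplicative shrinkage $1/(1+bn^{\gamma-1})$ in the covariance recursion; everything else is routine.
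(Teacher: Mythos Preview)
Your proposal is correct and follows essentially the same argument as the paper: induction on $n$, substitution of the bound $\omega_n\ge h_0\kappa n^{1+\beta-\gamma}$ into the recursion of Lemma~\ref{lem:conditionbound}, the elementary inequality $\tfrac{1}{1+x}\ge 1-x$, and the monotonicity $(n+1)^{-\theta}\le n^{-\theta}$, arriving at the identical smallness condition $\eta\le \alpha_0^2\kappa/(h_0 M_1^2 d_u)$. The only cosmetic differences are your use of the shorthand $\theta$ and $b$, and your explicit rewriting of the inflation exponent as $(\gamma-1)-\theta$ to highlight the matching of rates.
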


\begin{proof}
We will again show this claim by induction. By Lemma \ref{lem:conditionbound}, $c_1\geq \alpha_0^2$ so we can find an $\eta>0$ such that 
$c_1\geq \eta$.  Suppose $c_n\geq \eta n^{-1-\beta+\gamma}$, by Lemma \ref{lem:conditionbound} and Lemma \ref{lem:precisionsequence}, it suffices to show 
\[
\frac{\eta n^{-1-\beta+\gamma}}{1+\tfrac{M_1^2 d_u}{\kappa  n^{1-\gamma}}}+\alpha_0^2 h_0^{-1} n^{2\gamma-2-\beta}\geq \eta (n+1)^{-1-\beta+\gamma}. 
\]
Note that $\eta (n+1)^{-1-\beta+\gamma}\leq \eta n^{-1-\beta+\gamma},$
 it suffices to show that 
\[
\frac{\eta }{1+\tfrac{M_1^2 d_u}{\kappa  n^{1-\gamma}}}+\alpha_0^2h_0^{-1} n^{\gamma-1}\geq \eta. 
\]
Note the inequality $\frac{1}{1+a}\geq 1-a$ holds for all $a>0$, so it suffices for us to show
\[
\eta-\tfrac{M_1^2 d_u}{\kappa }n^{-1+\gamma}\eta+\alpha_0^2h_0^{-1} n^{\gamma-1}\geq \eta, 
\]
or equivalently
\[
\alpha_0^2h_0^{-1}\geq \eta \tfrac{M_1^2 d_u}{\kappa  }. 
\]
This can be done by  choosing a small $\eta$. 
\end{proof}

\section{Connection with Gauss--Newton}
\label{sec:proof2}
\begin{proof}[Proof of Proposition \ref{prop:GN}]
We introduce the following notations 
\[
\Delta u_n^{(i)}=u_n^{(i)}-m_n,\quad J_n:=\nabla \calH(m_n),\quad r_{n}^{(i)}:=\calH(u^{(i)}_n)-\calH(m_n)-J_n\Delta u^{(i)}_n,
\] 
\[
\Delta \calH(u^{(i)}_n):=\calH(u^{(i)}_n)-\Hbar_n=J_n\Delta u^{(i)}_n+\Delta r_{n}^{(i)},\quad \Delta r_{n}^{(i)}=r_n^{(i)}-\frac1K\sum_{j=1}^K r_n^{(j)}.
\]
Then the ensemble covariance matrices can be written as
\begin{align*}
C^{pp}_n&=\frac1K \sum_{i=1}^K\Delta \calH(u^{(i)}_n)\otimes\Delta \calH(u^{(i)}_n)\\
&=J_n C^{uu}_n J_n^T+J_n C^{ur}_n+C^{ru}_nJ_n^T +C^{rr}_n, 
\end{align*}
and
\begin{align*}
C^{up}_n=\frac1K \sum_{i=1}^K\Delta u^{(i)}_n\otimes\Delta \calH(u^{(i)}_n)=C^{uu}_nJ_n^T+C^{ur}_n,
\end{align*}
where
\[
C^{ur}_n=\frac1K\sum_{i=1}^K\Delta u^{(i)}_n\otimes \Delta r^{(i)}_n, \quad
C^{rr}_n=\frac1K\sum_{i=1}^K\Delta r^{(i)}_n\otimes \Delta r^{(i)}_n. 
\]
Recall the movements from EKI and Gauss--Newton are 
\[
\Delta_n:=C^{up}_n( C^{pp}_n+h^{-1}_n\bfI)^{-1}(z-\calH(m_n)),\quad G_n=C^{uu}_nJ_n^T ( J_nC^{uu}_nJ_n^T+ h_n^{-1}\bfI)^{-1} (z-\calH(m_n)).
\]
Given these formulations,  we will achieve our claim by showing
\begin{equation}
\label{tmp:Cndiff}
\|C_n^{up}-C_n^{uu} J_n^T\|=\|C_n^{ur}\|\leq d_uM_2\sqrt{K  }\|C^{uu}_n\|^\frac{3}{2},
\end{equation}
and
\begin{align}
\notag
\|( C^{pp}_n+h^{-1}_n\bfI)^{-1}-&( J_nC^{uu}_nJ_n^T+ h_n^{-1}\bfI)^{-1}\|\\
\label{tmp:Cinvdiff}
&\leq 4h_n^2(M_2^2 Kd_u^2 \|C_n^{uu}\|^2+2M_1 M_2 d_u\sqrt{K} \|C_n^{uu}\|^{\frac{3}{2}}). 
\end{align}
Once we have \eqref{tmp:Cndiff} and \eqref{tmp:Cinvdiff},  Proposition \ref{prop:GN} can be  proved by
\begin{align*}
\|G_n-\Delta_n\|\leq  &\|C_n^{up}-C_n^{uu} J_n^T\|\| (C_n^{pp}+h_n^{-1}\bfI)^{-1}\| \|z-\calH(m_n)\|\\
&+\|C_n^{uu}\| \|J_n^T\|\|( C^{pp}_n+h^{-1}_n\bfI)^{-1}-( J_nC^{uu}_nJ_n^T+ h_n^{-1}\bfI)^{-1}\|\|z-\calH(m_n)\|,
\end{align*}
along with $\| (C_n^{pp}+h_n^{-1}\bfI)^{-1}\|\leq h_n$. Note that  the high order terms of $\|C_n^{uu}\|$, which tend to zero by Theorem \ref{thm:precision}, are suppressed in Proposition  \ref{prop:GN}. 

For \eqref{tmp:Cndiff}, we note that given two uninorm vectors $w$ and $v$
\begin{align*}
w^TC_n^{ur} v&=\frac1K\sum_{i=1}^K\langle \Delta u^{(i)}_n, w\rangle\langle v, \Delta r^{(i)}_n\rangle\\
&\leq \frac1K\sqrt{\sum_{i=1}^K\langle \Delta u^{(i)}_n, w\rangle^2 \sum_{i=1}^K\langle v, \Delta r^{(i)}_n\rangle^2}=\sqrt{w^TC^{uu}_n w v^TC^{rr}_n v}.
\end{align*}
In other words we have $\|C^{ur}_n\|\leq \sqrt{\|C^{uu}_n\|\|C^{rr}_n\|}$. Next, notice by Taylor expansion and Assumption \ref{aspt:obsLip},  $\|r_n^{(i)}\|\leq M_2 \|\Delta u_n^{(i)}\|^2$. Therefore
\begin{align*}
\text{tr}(C^{rr}_n)=\frac1K \sum_{i=1}^K \|\Delta r_n^{(i)}\|^2\leq\frac1K \sum_{i=1}^K \| r_n^{(i)}\|^2\leq \frac{M^2_2}{K} \sum_{i=1}^K\|\Delta u_n^{(i)}\|^4
\leq \frac{M^2_2}{K} \left(\sum_{i=1}^K\|\Delta u_n^{(i)}\|^2\right)^2.
\end{align*}
Note that $\sum_{i=1}^K\|\Delta u_n^{(i)}\|^2=K\text{tr}(C^{uu}_n),$ we have
\[
\|C^{rr}_n\|\leq \text{tr}(C^{rr}_n)\leq M^2_2K \text{tr}(C^{uu}_n)^2\leq M^2_2Kd_u^2 \|C^{uu}_n\|^2. 
\]
Combining all the estimates we have \eqref{tmp:Cndiff}.

Next we turn to \eqref{tmp:Cinvdiff}, where by Lemma A.1 in \cite{MTM19} we have
\[
\|[ J_nC^{uu}_nJ_n^T+ h_n^{-1}\bfI]^{-1}-[C^{pp}_n+h_n^{-1}\bfI]^{-1} \|\leq \frac{ \|J_nC^{uu}_nJ_n^T-C^{pp}_n \|\|[ J_nC^{uu}_nJ_n^T+ h_n^{-1}\bfI]^{-1}\|^2}{1-\|J_nC^{uu}_nJ_n^T-C^{pp}_n \|\|[ J_nC^{uu}_nJ_n^T+ h_n^{-1}\bfI]^{-1}\|}.
\]
Since $\|[ J_nC^{uu}_nJ_n^T+ h_n^{-1}\bfI]^{-1}\|\leq h_n$, it suffices to show 
\begin{align*}
\|J_nC^{uu}_nJ_n^T-C^{pp}_n \|&=\|J_n C^{ur}_n+C^{ru}_nJ_n^T +C^{rr}_n\|\\
&\leq 2M_1 \|C^{ur}_n\| +\|C^{rr}_n\|\leq M_2^2 Kd_u^2 \|C_n^{uu}\|^2+2M_1 M_2 d_u\sqrt{K} \|C_n^{uu}\|^{\frac{3}{2}}. 
\end{align*}
This concludes our proof. 
\end{proof}
\section{Iterative descent for EKI}
\begin{proof}[Proof of Proposition \ref{prop:perturb}]
Recall that the mean movement suggested by EKI and Gauss--Newton are 
\[
\Delta_n=C_n^{up}(C_n^{pp}+h_n^{-1}\bfI)^{-1} (z-\calH(m_n)), \quad G_n=C^{uu}_nJ_n^T ( J_nC^{uu}_nJ_n^T+ h_n^{-1}\bfI)^{-1} (z-\calH(m_n)). 
\]
The mean update is $m_{n+1}=m_n+\Delta_n$. By applying the Taylor expansion, we have
\[
\calH(m_{n+1})-z=\calH(m_n)-z+J_n \Delta_n+R_{2,n},\quad R_{2,n}:=\frac12 \Delta_n^T\nabla^2 \calH(m_n+s_n \Delta_n)\Delta_n.
\]
Here $s_n$ is a certain number in the interval $[0,1]$.  The square $l_2$ norm of above is given by 
\begin{align*}
\ell(m_{n+1})=&\|\calH(m_n)-z\|^2- 2\langle \calH(m_n)-z, J_n \Delta_n\rangle
+\|J_n \Delta_n\|^2\\
&+\|R_{2,n}\|^2+ 2\langle \calH(m_n)-z+J_n \Delta_n, R_{2,n}\rangle\\
=&\ell(m_n)- 2\langle \calH(m_n)-z, J_n G_n\rangle\\
&+ 2\langle \calH(m_n)-z, J_n (G_n-\Delta_n)\rangle+\|J_n \Delta_n\|^2+\|R_{2,n}\|^2 \\ &+ 2\langle \calH(m_n)-z+J_n \Delta_n, R_{2,n}\rangle.
\end{align*}
Recall that before Proposition \ref{prop:perturb}, we showed that
\[
\langle \calH(m_n)-z, J_n G_n\rangle=\|J_n^T (\calH(m_n)-z)\|^2_{(h_nC^{uu}_n)^{-1}+J_n^T\Gamma^{-1}_+ J_n}.
\]
So it remains to bound the residual
\begin{equation}
\label{tmp:Rn}
R_n=\|J_n \Delta_n\|^2+\|R_{2,n}\|^2+ 2\langle \calH(m_n)-z+J_n \Delta_n, R_{2,n}\rangle+2\langle \calH(m_n)-z, J_n (G_n-\Delta_n)\rangle.  
\end{equation}
By Assumption \ref{aspt:obsLip}, 
\[
\|J_n \Delta_n\|^2\leq M_1^2 \|\Delta_n\|^2,
\]
and recall that
\[
\|R_{2,n}\|\leq \|\frac12 \nabla^2 \calH(m_n+s_n \Delta_n)\|\|\Delta_n\|^2\leq M_2 \|\Delta_n\|^2. 
\]
By \eqref{tmp:Cndiff}, $\|C_{n}^{up}\|\leq \sqrt{\|C^{pp}_n\|\|C^{uu}_n\|}\leq M_1\sqrt{d_u}\|C^{uu}_n\|$, so
\[
\|\Delta_n\|\leq h_n\|C_{n}^{up}\|\|z-\calH(m_n)\|\leq M_1 h_n \sqrt{d_u} \|C^{uu}_n\| \|z-\calH(m_n)\|.
\]
Then
\begin{align*}
2\langle \calH(m_n)-z+J_n \Delta_n, R_{2,n}\rangle&\leq 2\|\calH(m_n)-z\|\| R_{2,n}\|+2\|J_n \Delta_n\|\|R_{2,n}\|\\
&\leq 2\|\calH(m_n)-z\|\| R_{2,n}\|+\|J_n \Delta_n\|^2+\|R_{2,n}\|^2. 
\end{align*}
Lastly, we have the following by Proposition \ref{prop:GN},
\begin{align*}
2\langle \calH(m_n)-z, J_n (G_n-\Delta_n)\rangle&\leq 2M_1M_3 h_n \|\calH(m_n)-z\| \|C_n^{uu}\|^{\frac{3}{2}} .
\end{align*}
Replacing each term of \eqref{tmp:Rn} with an upper bound developed above, we find
\[
R_n\leq 2M_1M_3h_n \|C_n^{uu}\|^{\frac{3}{2}}  \|z-\calH(m_n)\|^2+2M_1^2 \|\Delta_n\|^2+2M_2^2 \|\Delta_n\|^4+ 2\|\calH(m_n)-z\| \|\Delta_n\|^2,
\]
so there is a constant $M_4$ such that
\[
R_n\leq M_4 h_n \|C_n^{uu}\|^{\frac{3}{2}}\max\{\|z-\calH(m_n)\|^4,1\}. 
\]
\end{proof}

\section{Boundedness of the iterates}

\begin{proof}[Proof of Proposition \ref{prop:ESRF}]
We pick an $n_0$, so that by Theorem \ref{thm:precision}, when $n\geq n_0$
\[
h_n \|C_n^{uu}\| M^2_1\sqrt{d_u}<1, \quad Kd_u\|C_n^{uu}\|\leq 1. 
\]
Note that $\|C^{pu}_n\|\leq\sqrt{\|C^{pp}_n\|\|C^{uu}_n\| }$, and by Assumption \ref{aspt:obsLip}, 
\begin{align*}
\|C^{pp}_n\|\leq \text{tr}(C^{pp}_n)=\frac1K \sum_{i=1}^K \|\Delta \calH(u_n^{(i)})\|^2\leq\frac {M_1^2}K \sum_{i=1}^K \|\Delta u_n^{(i)}\|^2= M_1^2 \tr(C^{uu}_n)\leq M_1^2 d_u\|C^{uu}_n\|. 
\end{align*}
If $\|m_n\|\leq M+1$, then
\begin{align*}
\|m_{n+1}\|&\leq \|m_n\|+\|C^{up}_n( C^{pp}_n+h^{-1}_n\bfI)^{-1}\|(z-\calH(m_n))\|\\
&\leq \|m_n\|+ M_1 \sqrt{d_u} h_n\|C^{uu}_n\|(\|z\|+M_1 \|m_n\|)\\
&\leq (1+h_n \|C^{uu}_n\| M^2_1 \sqrt{d_u})\|m_n\|+ \sqrt{d_u} h_n\|C^{uu}_n\|\|z\|\leq 2M+\|z\|+2. 
\end{align*}
If $m_n\geq M+1$, then due to 
\[
\sum_{i=1}^K\|\Delta u_n^{(i)}\|^2=K\tr(C^{uu}_n)\leq  Kd_u\|C_n^{uu}\|\leq 1,
\]
we have $\|u_n^{(i)}\|\geq M$, by our assumption $\calG(u_n^{(i)})=\mathbf{0}$. This leads to 
\begin{align*}
m_{n+1}&=m_n-C^{up}_n( C^{pp}_n+h^{-1}_n\bfI)^{-1}(z-\calH(m_n))\\
&=m_n- \begin{bmatrix} \mathbf{0}, C^{uu}_n\end{bmatrix} \begin{bmatrix} h_n\bfI , & \mathbf{0}\\ \mathbf{0}, &[C^{uu}_n+h_n\bfI]^{-1}\end{bmatrix}\begin{bmatrix} \mathbf{0}\\ m_n \end{bmatrix}\\
&=m_n-C^{uu}_n[C^{uu}_n+h_n\bfI]^{-1}m_n \\
&=[h_n^{-1}C_n^{uu}+\bfI]^{-1}m_n,
\end{align*}
using that $[h_n^{-1}C_n^{uu}+\bfI]^{-1}\preceq \bfI$,  $\|m_{n+1}\|\leq \|m_n\|$.  As a combination of the two cases, we see that the EKI mean sequence will be decreasing if $m_n\geq M+1$ and $n\geq n_0$. 
\end{proof}

\section{Convergence to minimums}

\begin{proof}[Proof of Theorem \ref{thm:critical}]
Denote the minimum eigenvalue of $\Gamma_+$ as $\gamma_m$ and the maximum eigenvalue of $\Sigma$ as $\sigma_m$.
Then by  Theorem \ref{thm:precision}, we have
\[
(h_n C_n^{uu})^{-1}+J_n^T\Gamma^{-1}_+ J_n\preceq h^{-1}_n \kappa^{-1}_m n^{1+\beta-\gamma} \Sigma^{-1}+\gamma^{-1}_m J_n^T J_n\preceq
h_0^{-1}\kappa^{-1}_m n^{1-\gamma}\sigma_m^{-1}+M_1^2 \gamma_m^{-1}. 
\]
Inserting this in Proposition \ref{prop:perturb}, results in 
\begin{align}
\notag
\ell(m_{n+1})\leq &\ell(m_n)-2(h_0^{-1}\kappa^{-1}_m n^{1-\gamma}\sigma_m^{-1}+M_1^2 \gamma_m^{-1})^{-1}\|\nabla \ell (m_n)\|^2\\
\label{tmp:Ldiff1}
&+M_4 h_n \|C_n^{uu}\|^{\frac{3}{2}}\max\{\|z-\calH(m_n)\|^4,1\}.
\end{align}
Since $m_n$ is bounded, by Theorem \ref{thm:precision} we can let  $c=h_0 \kappa_m \sigma_m$  such that when $n$ is larger than a threshold $n_0$,
\[
2(h_0^{-1}\kappa^{-1}_m n^{1-\gamma}\sigma_m^{-1}+M_1^2 \gamma_m^{-1})^{-1}\geq c n^{\gamma-1},\quad \forall n\geq n_0.
\]
Furthermore there is a constant $D_1$ such that 
\[ 
M_4 h_n K \|C_n^{uu}\|^{\frac{3}{2}}\max\{\|z-\calH(m_n)\|^4,1\}\leq D_1 n^{\frac32\gamma-\frac32-\frac12\beta},\quad n\geq n_0.
\]
Therefore \eqref{tmp:Ldiff1} can be simplified as
\begin{equation}
\label{tmp:Ldiff}
\ell(m_{n+1})\leq\ell(m_n)-c n^{\gamma-1}\|\nabla \ell (m_n)\|^2+D_1 n^{\frac32\gamma-\frac32-\frac12\beta}\quad n\geq n_0.
\end{equation}
So summing \eqref{tmp:Ldiff} over  all $n$ between $n_0$ and $N$,
\begin{equation}
\label{tmp:Ldiff2}
\ell(m_{N})\leq \ell(m_{n_0})-c\sum_{n=n_0}^{N-1}n^{\gamma-1}  \|\nabla \ell (m_n)\|^2+D_1+D_1\sum_{n=n_0}^{N-1} n^{\frac32\gamma-\frac32-\frac12\beta}. 
\end{equation}
We need an estimate for the terms of form $\sum_{n=n_0}^{N-1} n^{\psi}$.  Note that for any fixed $\psi<0$, 
\[
\int_{n_0}^{N-1} n^{\psi}\leq \sum_{n=n_0}^{N-1} n^{\psi}\leq \int_{n_0-1}^{N-2} n^{\psi}, 
\]
so for some constant $D_2$, that may depend on $n_0$ and $\psi$, the following holds for
\[
\psi_N-D_2\leq \sum_{n=n_0}^{N-1} n^{\psi}\leq \psi_N+D_2, \quad \text{where}\quad \psi_N:=\begin{cases}
\frac{N^{1+\psi}}{1+\psi} &\text{if } \psi> -1,\\
\log N&\text{if }\psi=-1,\\
0&\text{if }\psi<-1.
\end{cases}
\] 
By plugging these estimates into \eqref{tmp:Ldiff}, we can also bound $\ell(m_{n_0})$. As a consequence, if $\|\nabla \ell (m_n)\|^2\geq \epsilon^2$ for all $n\leq N$, there is some constant $D_3$,
\[
0\leq -c\epsilon^2 \log N+D_3\quad\text{if}\quad \gamma=0,
\]
\[
0\leq -\frac{c\epsilon^2}{\gamma}  N^\gamma+D_3N^{\max\{0, \frac32\gamma-\frac12-\frac12\beta\}}\quad\text{if}\quad \gamma>0.
\]
This leads to our claim since they do not hold with our choice of $N$. 
\end{proof}

\begin{proof}[Proof of Theorem \ref{thm:convex}]
By  strong convexity at $m_n$, we have
\[
\ell(u^*)-\ell(m_n)\geq \langle u^*-m_n, \nabla \ell(m_n)\rangle\geq -\|\nabla \ell(m_n)\|\|u^*-m_n\|. 
\]
By strong convexity at $u^*$, we have
\[
\ell(m_n)-\ell(u^*)\geq \lambda_c \|u^*-m_n\|^2. 
\]
Therefore, we have
\[
\|\nabla \ell(m_n)\|^2\geq \frac{|\ell(m_n)-\ell(u^*)|^2}{\|u^*-m_n\|^2}\geq \lambda_c (\ell(m_n)-\ell(u^*)). 
\]
Subsitituing this into \eqref{tmp:Ldiff}, we have the following with $c=h_0 \kappa_m \sigma_m$, 
\[
( \ell(m_{n+1})-\ell(u^*))\leq (1-\lambda_ccn^{\gamma-1})( \ell(m_{n})-\ell(u^*)) + D_1 n^{\frac32\gamma-\frac32-\frac12\beta},\quad n\geq n_0.
\]
Then by Gronwalls inequality, we have that 
\[
 \ell(m_{N})-\ell(u^*)\leq S_1+S_2,\quad
 \]
 where
 \[
 S_1= ( \ell(m_{n_0})-\ell(u^*))\prod_{n=n_0}^{N-1} (1-\lambda_c cn^{\gamma-1}), \quad S_2=D_1 \sum_{j=n_0}^{N-1} j^{\frac32\gamma-\frac32-\frac12\beta} \prod_{n=j+1}^{N-1} (1-\lambda_c cn^{\gamma-1}). 
\]
To bound $S_1$, we note that $1-x\leq \exp(-x)$, so if $\gamma=0$
\begin{align*}
\prod_{n=n_0}^{N-1} (1-\lambda_c cn^{-1})\leq \exp\left(-\lambda_c c \sum_{n=n_0}^{N-1} n^{-1}\right)&\leq \exp(-\lambda_c c (\log (N-1)-\log (n_0))) \\
&=\left(\frac{n_0}{N-1}\right)^{\lambda_c c}. 
\end{align*}
When $0<\gamma<1$,
\[
\prod_{n=n_0}^{N-1} (1-\lambda_c cn^{\gamma-1})\leq \exp\left(-\lambda_c c \sum_{n=n_0}^{N-1} n^{\gamma-1}\right)\leq \exp\left(-\frac{\lambda_c c}{\gamma}( (N-1)^{\gamma}-(n_0)^{\gamma})\right).
\]
It is evident in both cases, the upper bound of $S_1$ decays to zero faster than the one in the Theorem's statement. To bound $S_2$, we will show that  when $j$ is sufficiently large, 
\[
(j+1)^{-\alpha} \prod_{n=j+1}^{N-1} (1-\lambda_c cn^{-1})- j^{-\alpha}\prod_{n=j}^{N-1} (1-\lambda_c cn^{-1})\geq j^{\frac32\gamma-\frac32-\frac12\beta} \prod_{n=j+1}^{N-1} (1-\lambda_c cn^{-1}).
\]
Since $\prod_{n=j+1}^{N-1} (1-\lambda_c cn^{-1})$ is a common factor, all we require to show is
\begin{equation}
\label{tmp:j1}
(1-\tfrac1{j+1})^{\alpha}- (1-\lambda_c c j^{\gamma-1})\geq  j^{\alpha+\frac32\gamma-\frac32-\frac12\beta}. 
\end{equation}
Note that \eqref{tmp:j1} is the identity when $\frac1j=0$, therefore we Taylor expand \eqref{tmp:j1} in terms of $\frac1j$, which is
\[
-\frac\alpha j+\lambda_c c j^{\gamma-1}\geq j^{\alpha+\frac32\gamma-\frac32-\frac12\beta}+(\text{higher order terms}).
\]
For this to hold for $j$  sufficiently large, we just need 
\[
\text{If } \gamma>0,\quad\gamma-1>\frac32\gamma+\alpha-\frac32-\frac12 \beta\Rightarrow \alpha<-\frac12\gamma+\frac12\beta+\frac12,
\]
\[
\text{If } \gamma=0,\quad\lambda_c c -\alpha>0, \quad \alpha<\frac12+\frac12\beta. 
\]
This is part of the parameter assumption of $\alpha$. Therefore, by using a larger $n_0$, we find
\begin{align*}
\sum_{j=n_0}^{N-1} j^{\frac32\gamma-\frac32-\frac12\beta} \prod_{n=j+1}^{N-1} (1-\lambda_c cn^{-1})
&\leq  \sum_{j=n_0}^{N-1}( (j+1)^{-\alpha} \prod_{n=j+1}^{N-1} (1-\lambda_c cn^{-1}) \\ &- j^{-\alpha}\prod_{n=j}^{N-1} (1-\lambda_c cn^{-1}))\\
&\leq N^{-\alpha}.
\end{align*}
This concludes our proof. 
\end{proof}

\section{Iterative Scheme}

\begin{algorithm}[H]
\caption{\textcolor{black}{Ensemble Kalman Inversion - (ETKF formulation \cite{BEM01})}}
\label{alg:1}
\begin{algorithmic}[1]
\State \textcolor{black}{Draw an initial ensemble $\{u^{(i)}_0\}_{i=1}^{K} \sim \mathcal{N}(0,C_0)$ with $K$ ensemble members.} 
\State  \textcolor{black}{Choose $0 < \gamma < 1$  where $ \gamma-1\leq\beta\leq \gamma$}.
\State  \textcolor{black}{Repeat steps 4-8 for $n=0,\ldots,N-1$,}
\\  \textcolor{black}{Compute cross covariances using \eqref{eq:samp_cov}.}
\State \textcolor{black}{Update mean through}
$$
\textcolor{black}{m_{n+1}=m_n+ C^{up}_n( C^{pp}_n +\Gamma_+ )^{-1}(z -\calH(m_n)), \quad {m}_n = \frac{1}{K}\sum^{K}_{i=1}u^{(i)}_n.}
$$
\State \textcolor{black}{Define ensemble spread $S_n \in \mathbb{R}^{d_u \times K}$ as}
$$
\textcolor{black}{S_n = [u^{(1)}_n - {m}_n, \ldots, u^{(K)}_n - {m}_n].}
$$
\State \textcolor{black}{Find a matrix $T_n \in \mathbb{R}^{d_u \times d_u}$, such that}
$$
\textcolor{black}{\frac{1}{K-1}T_n  S_n \otimes T_n  S_n = C^{uu}_{n+1}=C^{uu}_n- C^{up}_n (C^{pp}_n +h^{-1}_n \Gamma_+)^{-1}C_n^{pu}+\alpha^2_n \Sigma,}
$$
\textcolor{black}{where $$\alpha_n^2=\alpha_0^2h_0^{-1} n^{2\gamma-\beta-2}.$$}
\textcolor{black}{One possibility is letting $T_n=(C^{uu}_{n+1})^{\frac{1}{2}}(C^{uu}_{n})^{-\frac12}$.}
\State \textcolor{black}{The spread is updated to $\tilde{S}_n = T_n S_n $, and ensemble members updated as}
$$
\textcolor{black}{u^{(i)}_{n+1} = m_n + \tilde{S}^{(i)}_n.}
$$
%
\end{algorithmic}
\end{algorithm}



\end{document}